\newcommand{\MK}{\text{MK1}}
\newcommand{\lk}{\text{lk}}
\theoremstyle{plain}
\newtheorem{theo}{Theorem}
\newtheorem{coro}[theo]{Corollary}
\newtheorem{prop}[theo]{Proposition}
\newtheorem{lemm}[theo]{Lemma}
\theoremstyle{definition}
\newtheorem{defi}[theo]{Definition}
\theoremstyle{remark}
\newtheorem{rema}[theo]{Remark}
\newtheorem{exem}{Example}
\numberwithin{equation}{section}
\numberwithin{theo}{section}
\title[]{Negative definite spin filling and branched double covers}
\date {August 14, 2023}
\author{Soheil Azarpendar}
\address{Mathematical Institute, University of Oxford, Andrew Wiles Building, Radcliffe
Observatory Quarter, Woodstock Road, Oxford, OX2 6GG, UK}
\email{azarpendar@maths.ox.ac.uk} 
\keywords{}
\begin{document}

\begin{abstract}
We investigate the negative definite spin fillings of branched double covers of alternating knots. We derive some obstructions for the existence of such fillings and find a characterization of special alternating knots based on them. 
\end{abstract}

\maketitle

\section{Introduction}
Given a non-split link $K$, let $\Sigma(S^3,K)$ denote the branched double cover of $S^3$ along $K$. A filling of $\Sigma(S^3,K)$ is a 4-manifold $X$ with $\partial X = \Sigma(S^3,K)$. One method to construct fillings of $\Sigma(S^3,K)$  is to take a spanning surface of $K$, denoted by $F$, and build the branched double cover of $D^4$ over $F^{+}$, where $F^{+}$ is the properly embedded surface that comes from pushing the interior of $F$ inside $D^4$. We use the term spanning filling to distinguish fillings that can be constructed through this method. One of the most important facts about spanning fillings of branched double covers of links is due to Gordon and Litherland \cite{Gordon1978}. They proved that the intersection form of $\Sigma(D^4,F^{+})$ is equal to the Goeritz form of $F$. \\

A standard choice for a spanning surface of $K$, comes from a checkerboard coloring of the regions in a knot diagram. Considering all of the white (resp.~black) regions in $S^2$ and adding twisted bands between them around each crossing will result in a spanning surface of $K$. We refer to this surface as white (resp.~black) Tait surface and denote it by $F_W$ (resp.~$F_B$). For alternating links, $\Sigma(D^4, F_W^{+})$ and $\Sigma(D^4, F_B^{+})$ give us definite fillings of $\Sigma(S^3,K)$ which we will call white (resp.~black) Tait fillings. Greene \cite{AltGreene} proved that this property gives a topological characterization of alternating links. To fix a standard checkerboard colouring of alternating knots we assume that the white Tait surface is negative definite  and gives us a negative definite filling of the branched double cover. Existence and properties of these fillings are widely studied and bounds on their Betti numbers can be derived from Heegaard Floer and Seiberg--Witten theories.\\

We call an alternating knot $K$ \emph{special} if the black Tait surface is orientable. This construction on special alternating links gives us a spin negative definite filling of the branched double cover. In this paper, we discuss how these fillings can detect special alternating links among all alternating links. This is described in the following theorem.
\begin{theo}\label{specialchar}
Let $K$ be a non-split alternating link and $m$ be the number of unmarked white regions in a reduced alternating diagram. If $X$ is a simply-connected negative definite spin filling of $\Sigma(S^3,K)$ then the following inequality holds: 
$$b_2(X) \leq m.$$
Furthermore, equality will only be achieved when $K$ is special alternating.
\end{theo}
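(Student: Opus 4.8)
The plan is to run the argument through the Heegaard Floer correction terms (the $d$-invariants) of $Y:=\Sigma(S^3,K)$, exploiting that the white Tait filling is a \emph{sharp} negative definite filling. Write $W:=\Sigma(D^4,F_W^+)$. As $F_W$ is a connected spanning surface coming from a reduced alternating diagram, $W$ is a simply-connected negative definite $4$-manifold with $\partial W=Y$ whose intersection form is the white Goeritz form of $F_W$ by Gordon--Litherland, so $b_2(W)=m$; the hypothesis that $F_W$ is negative definite means this form is negative definite, hence nondegenerate, so $\det K\neq 0$ and $Y$ is a rational homology sphere. The essential input, and the point at which the alternating hypothesis is used, is that $W$ is sharp: by Ozsv\'ath and Szab\'o's computation of the Heegaard Floer homology of branched double covers of alternating links, the restriction map $\mathrm{Spin}^c(W)\to\mathrm{Spin}^c(Y)$ is onto and, for every $\mathfrak s\in\mathrm{Spin}^c(Y)$,
\[
4\,d(Y,\mathfrak s)=\max\bigl\{\,c_1(\mathfrak t)^2+m \ :\ \mathfrak t\in\mathrm{Spin}^c(W),\ \mathfrak t|_Y=\mathfrak s\,\bigr\}.
\]

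Next I would record the elementary consequences of negative definiteness. Since $W$ is simply connected, $H^2(W;\mathbb Z)$ is torsion free, so for any $\mathfrak t\in\mathrm{Spin}^c(W)$ one has $c_1(\mathfrak t)^2\le 0$, with equality precisely when $c_1(\mathfrak t)=0$; a $\mathfrak t$ with $c_1(\mathfrak t)=0$ exists only if $W$ is spin. Substituting into the displayed formula, $4\,d(Y,\mathfrak s)\le m$ for every $\mathfrak s$, with $4\,d(Y,\mathfrak s)=m$ only when $W$ is spin.

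Now I would bring in $X$ via the correction-term inequality of Ozsv\'ath and Szab\'o. Being spin, $X$ carries a $\mathrm{Spin}^c$ structure $\mathfrak t_X$ with $c_1(\mathfrak t_X)=0$; set $\mathfrak s_0:=\mathfrak t_X|_Y$. Since $X$ is negative definite with $b_1(X)=0$, the inequality $c_1(\mathfrak t)^2+b_2(X)\le 4\,d(Y,\mathfrak t|_Y)$ applied to $\mathfrak t_X$ gives $b_2(X)\le 4\,d(Y,\mathfrak s_0)$, and the previous paragraph gives $4\,d(Y,\mathfrak s_0)\le m$; hence $b_2(X)\le m$, the asserted inequality. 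For the equality clause, suppose $b_2(X)=m$. Then $4\,d(Y,\mathfrak s_0)=m$, so the maximum in the displayed formula for $\mathfrak s=\mathfrak s_0$ is attained by some $\mathfrak t_0$ with $c_1(\mathfrak t_0)^2=0$; therefore $c_1(\mathfrak t_0)=0$, so $w_2(W)=0$ and $W$ is spin, i.e.\ its Goeritz form is even. By the standard relation between the two checkerboard surfaces---$\Sigma(D^4,F_W^+)$ is spin (equivalently, the white Goeritz form is even) if and only if the black Tait surface $F_B$ is orientable---this means $K$ is special alternating. Conversely, if $K$ is special alternating then $W$ itself is a simply-connected negative definite spin filling of $Y$ with $b_2(W)=m$, so the bound is attained.

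The only genuinely non-routine ingredient is the sharpness of $W$; this is the heart of the matter, and I would cite Ozsv\'ath and Szab\'o on the Heegaard Floer homology of branched double covers together with the notion of a sharp four-manifold (Owens--Strle). A second point needing care is the lemma identifying evenness of the white Goeritz form with orientability of $F_B$, which can be extracted from the Gordon--Litherland description of the pairing (equivalently, from the fact that $F_B$ is orientable exactly when the black Tait graph is bipartite). Everything else is bookkeeping with $\mathrm{Spin}^c$ structures and the negative definiteness of $W$.
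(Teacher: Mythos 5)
Your argument is correct and is essentially the paper's proof in different clothing: the sharpness formula you quote for $\Sigma(D^4,F_W^+)$ is exactly the content of Greene's correction-term formula $4d(Y,\mathfrak t(x))=q(v^W_x)+m$ that the paper uses, and your treatment of the equality case (equality forces a $\mathrm{Spin}^c$ structure on the Tait filling with $c_1=0$, hence an even Goeritz form, hence $K$ special) is the contrapositive of the paper's observation that for non-special $K$ every characteristic covector is nonzero, so $q(v^W_x)<0$. Both proofs then close with the same converse, namely that the white Tait filling itself realizes $b_2=m$ when $K$ is special.
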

Note that in the rest of the paper we work with decorated diagrams, i.e., diagrams with two marked adjacent regions, and $m$ always represents the number of unmarked white regions in a reduced alternating diagram.\\

The existence of simply-connected spin negative definite fillings of the branched double cover of non-special alternating knots is not trivial. Using inequalities from Heegaard Floer and Seiberg--Witten theories, we develop several obstructions to the existence of such fillings. The main ones are in the form of Theorems \ref{cutbound} and \ref{capbound}. First, we need to explain some notations.\\

In this paper, we work with $W$  and its subgraphs. Since we consider reduced diagrams, these graphs won't contain loops but can have multiple edges. We use $V_{G}$ to denote the vertex set of the graph $G$ and $E_G(,)$ to denote the set of edges between two disjoint subsets of $V_G$. Let $\widetilde{W}$ be the reduced white Tait graph; i.e., the white Tait graph $W$ with the vertex associated to the marked region deleted. A subgraph $C$ of $\widetilde{W}$ is called \emph{characteristic} if it satisfies the following equality:
$$\forall \ v \in V_{\widetilde{W}}: e_{W}(v,C) \equiv deg_{W}(v) \mod 2,$$
where $e_{W}(v,C)$ is defined by the formula 
\begin{equation}\label{eq:1}
  e_{W}(v,C) =
    \begin{cases}
      |E_{W}(\{v\}, V_C)|+deg_W(v) & \text{if $v \in C$}\\
      |E_{W}(\{v\}, V_C)| & \text{if $v \notin C$}.\\
    \end{cases}       
\end{equation}
Using $W$, we will build a Kirby diagram for $\Sigma(S^3,K)$ in \ref{Sec2}. The second term in Equation \ref{eq:1} needs to be considered to account for framing of the components. \\ 

Let $\mathcal{C}_{\widetilde{W}}$ denote the set of characteristic subgraphs of $\widetilde{W}$. We will see that these subgraphs classify spin structures on $\Sigma(S^3,K)$ (Theorem \ref{charspin}). For a non-special alternating link, $W$ contains vertices with odd degree and as a result, a characteristic subgraph can't be empty. 
\begin{theo}\label{cutbound}
Let K be a non-special alternating link with odd determinant. If 
$$min_{C \in \mathcal{C}_{\widetilde{W}}} \ |E_{W}(V_C,V_W \setminus V_C)| \geq |V_W|-1,$$
then $\Sigma(S^3,K)$ doesn't have a simply connected negative definite spin filling.
\end{theo}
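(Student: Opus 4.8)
The plan is to argue by contradiction: suppose $X$ is a simply-connected negative definite spin filling of $Y:=\Sigma(S^3,K)$, and derive a contradiction by bounding the Heegaard Floer correction term of $Y$ at its spin structure from two sides. Since $\det(K)$ is odd, $H_1(Y;\mathbb{Z})$ has odd order, so $Y$ is a rational homology sphere carrying a unique spin structure $\mathfrak{t}$, and the spin$^{c}$ structure $\mathfrak{s}_X$ on $X$ with $c_1(\mathfrak{s}_X)=0$ restricts to $\mathfrak{t}$. First I would feed $(X,\mathfrak{s}_X)$ — negative definite, $b_1(X)=0$, boundary the rational homology sphere $Y$ — into the Ozsv\'ath--Szab\'o $d$-invariant inequality, getting
\[ b_2(X)\;=\;c_1(\mathfrak{s}_X)^2+b_2(X)\;\le\;4\,d(Y,\mathfrak{t}). \]

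Next I would compute $d(Y,\mathfrak{t})$ from the white Tait filling $\Sigma(D^4,F_W^{+})$. By Gordon--Litherland this is a simply-connected negative definite $4$-manifold with $\partial \Sigma(D^4,F_W^{+})=Y$, with $b_2=|V_W|-1$, and with intersection form the (negative definite) Goeritz form of $F_W$; and by the work of Ozsv\'ath and Szab\'o on branched double covers of alternating links it is moreover sharp, so
\[ 4\,d(Y,\mathfrak{t})\;=\;\max\bigl\{\,c_1(\mathfrak{s})^2+(|V_W|-1)\ :\ \mathfrak{s}\in\mathrm{Spin}^{c}\bigl(\Sigma(D^4,F_W^{+})\bigr),\ c_1(\mathfrak{s})|_Y=0\,\bigr\}, \]
where I have used that, the determinant being odd, $c_1(\mathfrak{s})|_Y=0$ is equivalent to $\mathfrak{s}|_Y=\mathfrak{t}$. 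The crucial input is the dictionary set up in \ref{Sec2} and Theorem~\ref{charspin}: reading the Kirby diagram built from $W$, the spin$^{c}$ structures $\mathfrak{s}$ occurring above correspond to the characteristic subgraphs $C\in\mathcal{C}_{\widetilde{W}}$, with $c_1(\mathfrak{s}_C)^2=-\,|E_W(V_C,V_W\setminus V_C)|$, and the maximum is attained among the $\mathfrak{s}_C$. Granting this, $4\,d(Y,\mathfrak{t})=(|V_W|-1)-\min_{C\in\mathcal{C}_{\widetilde{W}}}|E_W(V_C,V_W\setminus V_C)|$, which is $\le 0$ exactly by the hypothesis. Combined with the first displayed inequality this forces $b_2(X)\le 0$, hence $b_2(X)=0$; but then $X$ is a simply-connected homology ball, so $Y$ is an integral homology sphere and $\det(K)=1$, which for an alternating link means $K$ is the unknot. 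The unknot is special alternating, contradicting the hypothesis, and the proof would be complete.

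The step I expect to be the real work is the identification used in the middle paragraph, namely that the Ozsv\'ath--Szab\'o $d$-invariant of $\Sigma(S^3,K)$, computed through the sharp white Tait filling, is literally $(|V_W|-1)-\min_{C}|E_W(V_C,V_W\setminus V_C)|$. On the algebraic side this rests on the identity $\mathbf{1}_S^{\mathsf{T}}\,G_W\,\mathbf{1}_S=-\,|E_W(S,V_W\setminus S)|$ for the Goeritz matrix $G_W$ and a vertex subset $S$, on the fact that $G_W\mathbf{1}_S$ is a characteristic covector precisely when $S$ is characteristic in the sense of Equation~\ref{eq:1}, and on the fact that $G_W\mathbf{1}_S\in\mathrm{im}(G_W)$ automatically, so that it restricts to a spin$^{c}$ structure with $c_1|_Y=0$. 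On the geometric side one must check that every spin$^{c}$ structure of $\Sigma(D^4,F_W^{+})$ with $c_1|_Y=0$ is of this shape up to modifications not affecting the square, and --- the genuinely delicate point --- that among all characteristic covectors in $\mathrm{im}(G_W)$ the one of largest square is realised by some subgraph vector $G_W\mathbf{1}_{V_C}$, so that no spin$^{c}$ structure outside $\mathcal{C}_{\widetilde{W}}$ makes $d(Y,\mathfrak{t})$ larger. With these facts --- already the purpose of Section~\ref{Sec2} and Theorem~\ref{charspin} --- in hand, the remaining argument is just the chaining of the two inequalities above.
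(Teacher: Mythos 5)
Your overall strategy is the same as the paper's: feed the zero--$c_1$ spin$^c$ structure of $X$ into the Ozsv\'ath--Szab\'o inequality, then show that the hypothesis forces $4d(Y,\mathfrak{t})\le 0$ via Greene's formula $4d(Y,\mathfrak{t})=\max_{v}\,q(v)+m$ over the characteristic covectors $v$ in the relevant orbit. The gap is exactly the step you flag as ``the genuinely delicate point,'' and you have framed it in a way that makes it look harder than it is. You do not need the equality $4d(Y,\mathfrak{t})=m-\min_C|E_W(V_C,V_W\setminus V_C)|$, nor the claim that the maximum of $q$ is attained at an indicator vector $G_W\mathbf{1}_{V_C}$; you only need the upper bound, i.e.\ that \emph{every} covector in the spin orbit satisfies $q(v)\le-\min_C|E_W(V_C,V_W\setminus V_C)|$ --- and that is the easy direction. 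The paper's argument: write $v=G_Wy$ with $y\in\mathbb{Z}^m$ (possible since $c_1(\mathfrak{t})=0$), so $q(v)=y^TG_W^{-1}v=y^TG_Wy$; extend $y$ by $y_{m+1}=0$ and use the Laplacian identity
$$y^TG_Wy=-\sum_{\{v_i,v_j\}\in E_W}(y_i-y_j)^2.$$
The characteristic-covector condition $(G_Wy)_i\equiv \deg_W(v_i)\bmod 2$ says precisely that the mod-$2$ reduction of $y$ is the indicator of a characteristic subgraph $C$; since $y_i\not\equiv y_j\bmod 2$ across the cut, each edge of $E_W(V_C,V_W\setminus V_C)$ contributes at least $1$ to the sum, giving $q(v)\le-|E_W(V_C,V_W\setminus V_C)|$. (As a byproduct this also shows your asserted equality is true --- the parity indicator vector realizes the maximum --- but the proof does not need it.) Without this computation your middle paragraph is an assertion, not a proof, so as written the argument is incomplete.

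Two smaller remarks. First, your reduction of the geometric side to ``spin$^c$ structures on the white Tait filling restricting to $\mathfrak{t}$'' is unnecessary: Greene's theorem already expresses $d(Y,\mathfrak{t})$ as a maximum over characteristic covectors in the orbit, so one can work purely with the linear algebra of $G_W$ and never mention sharpness of $\Sigma(D^4,F_W^+)$. Second, your endgame is actually more careful than the paper's: the chained inequalities only give $b_2(X)\le 0$, and your observation that $b_2(X)=0$ forces $X$ to be a homology ball, hence $\det(K)=1$, hence $K$ the unknot (which is special) is a legitimate way to close that edge case, which the paper elides.
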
 
\begin{theo}\label{capbound}
Let K be a non-special alternating link, and $\mathfrak{t} \in Spin^{\mathbb{C}}(\Sigma(S^3,K))$. Let $C$ be the characteristic subgraph associated to $\mathfrak{t}$. If
$$|E_{W}(V_C,V_W \setminus V_C)| \geq 9(|V_W|-1),$$
then $(\Sigma(S^3,K),\mathfrak{t})$ doesn't have a simply connected negative definite spin filling.\\
In particular, if
$$min_{C \in \mathcal{C}_{\widetilde{W}}} \ |E_{W}(V_C,V_W \setminus V_C)| \geq 9(|V_W|-1),$$
then $\Sigma(S^3,K)$ doesn't have a simply connected negative definite spin filling.
\end{theo}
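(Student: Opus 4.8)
The plan is to cap off a hypothetical filling with the white Tait filling and run a $10/8$-type argument on the resulting closed manifold. Suppose $X$ is a simply-connected negative definite spin filling of $(\Sigma(S^3,K),\mathfrak t)$ and write $b=b_2(X)$, $n=|V_W|-1$ (the rank of the Goeritz form), and $k=|E_W(V_C,V_W\setminus V_C)|$. Let $Z=\Sigma(D^4,F_W^{+})$ be the white Tait filling; it is negative definite of rank $n$, its intersection form is the Goeritz form of $F_W$ (Gordon--Litherland), and by Theorem \ref{charspin} the characteristic subgraph $C$ corresponds to a spin$^{\mathbb C}$ structure $\mathfrak s_Z$ on $Z$ restricting to $\mathfrak t$. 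A direct manipulation of Equation \ref{eq:1} identifies $c_1(\mathfrak s_Z)$ with the characteristic covector $\chi_C=\sum_{v\in V_C}v$, and a short computation of $\langle\chi_C,\chi_C\rangle=\sum_{v\in V_C}\langle\chi_C,v\rangle$ in the Goeritz lattice gives $\chi_C^2=-|E_W(V_C,V_W\setminus V_C)|=-k$. Glueing, $N:=X\cup_{\Sigma(S^3,K)}\overline Z$ is a closed, simply-connected $4$-manifold with $b_2^{+}(N)=n$, $b_2^{-}(N)=b$, $\sigma(N)=n-b$, carrying a spin$^{\mathbb C}$ structure $\mathfrak s$ that restricts to the spin structure of $X$ on one side and to $\overline{\mathfrak s_Z}$ on the other; hence $c_1(\mathfrak s)^2=0+k=k$, and $w_2(N)$ is the mod-$2$ reduction of $\chi_C$, supported in $\overline Z$.

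Next I would reduce to the genuinely spin setting by passing to a branched double cover. Represent $\chi_C\in H_2(\overline Z)\subset H_2(N)$ by an embedded characteristic surface $S$, built concretely from the Kirby picture constructed in Section \ref{Sec2}: each generator $v$ is carried by an embedded sphere (the core of the $v$-th $2$-handle capped by a disc the unknotted attaching circle bounds in $D^4$), and tubing together the $|V_C|$ spheres while resolving their mutual intersections produces $S$ with $g(S)=|E_W(V_C,V_C)|-|V_C|+1$. The double cover $\widetilde N\to N$ branched along $S$ is then spin, since $[S]$ is characteristic; it has $b_1=0$, and by the $g$-signature theorem together with the Euler-characteristic formula for branched covers,
\begin{equation*}
\sigma(\widetilde N)=2\sigma(N)-\tfrac12[S]^2=2(n-b)-\tfrac12 k,\qquad b_2(\widetilde N)=2\bigl(n+b+g(S)\bigr).
\end{equation*}
For $k$ large $\sigma(\widetilde N)<0$, so Furuta's $\tfrac{10}{8}$ theorem gives $b_2(\widetilde N)\ge\tfrac{10}{8}|\sigma(\widetilde N)|+2$; substituting the displayed formulas rewrites this as an inequality of the form $k\le \alpha\,n+\beta\,b+\gamma\,g(S)+\delta$ with explicit small constants. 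Feeding in the bound on $g(S)$ (and, if needed, the a priori estimate $b\le n$, which follows from the Ozsv\'ath--Szab\'o inequality applied to $X$ together with sharpness of the Goeritz form for alternating links, cf.\ Theorem \ref{specialchar}) one checks that the hypothesis $k\ge 9(|V_W|-1)$ violates this inequality, so no such $X$ exists; the ``in particular'' statement is then immediate, since for every $\mathfrak t$ the associated $k$ is bounded below by $\min_{C\in\mathcal C_{\widetilde W}}|E_W(V_C,V_W\setminus V_C)|$.

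The technical heart is the step through $\widetilde N$. First, the characteristic surface must be chosen so that $g(S)$ is genuinely under control — this is exactly where the explicitness of the handle diagram is essential, and where a careless choice of representative (or of the auxiliary characteristic class, replacing $\chi_C$ by $\chi_C+2\mu$ to reduce self-intersection and genus in the positive definite piece) would ruin the numerics. Second, Furuta's theorem must be applied in its sharpest form and the arithmetic carefully tracked; it is the accumulation of these constants that produces the coefficient $9$, and it is quite plausible that $9$ is not optimal. An alternative route, which I would keep in reserve in case the genus bound proves stubborn, avoids the cap altogether and instead invokes a $\tfrac{10}{8}$-type inequality for spin $4$-manifolds with rational homology sphere boundary (in the spirit of Manolescu's $Pin(2)$-equivariant Seiberg--Witten--Floer theory) applied to $X$ directly; that route shifts the burden onto computing the relevant Floer-theoretic invariant of $(\Sigma(S^3,K),\mathfrak t)$, which for alternating links should again be expressible through the Goeritz lattice.
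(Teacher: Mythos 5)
Your overall strategy --- close up $X$ with a second filling and run Furuta's $10/8$ theorem --- is the same as the paper's, but your way of reaching a \emph{spin} closed manifold is different and it breaks down. The paper does not cap with the white Tait filling $Z$ at all: it first converts the Tait surgery diagram into a diagram of an honest spin filling $X'$ of $(\Sigma(S^3,K),\mathfrak{t})$ via Kaplan's algorithm, using the chainmail structure (Lemma \ref{Kaplansimp}) to show that the characteristic sublink can be turned into a single unknot of framing $-f$ by handle slides alone, so that only the final blow-ups change $(b_2,\sigma)$, by $(f-2,f)$. Furuta is then applied to the genuinely spin manifold $X\cup_Y(-X')$. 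Your route instead glues in the non-spin cap $\overline Z$ and tries to pass to a double cover of $N=X\cup\overline Z$ branched along a surface $S$ representing the characteristic class $\chi_C$. This is the fatal step: a double cover of $N$ branched along $S$ exists only when $[S]$ is divisible by $2$ in $H_2(N;\mathbb{Z})$, and it is spin precisely when $[S]/2$ is characteristic --- not when $[S]$ is. Since $K$ is non-special, the Goeritz form is odd, so the characteristic vector $\chi_C$ pairs oddly with some class and is therefore \emph{not} divisible by $2$; the branched double cover you invoke does not exist, and no choice of representative surface fixes this.

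Two further problems would remain even if the cover existed. First, with $[S]^2=-k$ the signature formula gives $\sigma(\widetilde N)=2\sigma(N)-\tfrac12[S]^2=2(n-b)+\tfrac{k}{2}$, not $2(n-b)-\tfrac12 k$ as you wrote; with the correct sign the two sides of Furuta's inequality both grow linearly in $k$ only if the genus term is controlled. Second, your genus bound $g(S)=|E_W(V_C,V_C)|-|V_C|+1$ involves the number of edges \emph{internal} to $C$, which is not bounded by $n$ or by $k=|E_W(V_C,V_W\setminus V_C)|$ (parallel edges inside $C$ inflate $g(S)$ without affecting $k$), so the arithmetic cannot be closed in the generality of the theorem. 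Your reserve suggestion --- applying a $10/8$-type inequality for spin manifolds with boundary directly to $X$ --- is a legitimate alternative in spirit, but it is not developed, and the paper's actual mechanism (Kaplan's algorithm simplified for chainmail diagrams, which is also what yields Corollary \ref{characteristicS2}) is the missing ingredient you would need to supply.
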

While investigating Theorem \ref{capbound}, we find certain obstructions for 4-manifolds to have chainmail Kirby diagrams (See Section \ref{Sec4} for details). This leads to Corollary \ref{characteristicS2} which states that any closed 4-manifold with a chainmail Kirby diagram has a characteristic embedded sphere.\\

Theorem \ref{cutbound} and \ref{capbound}  turn out to be generalized versions of a known obstruction of negative definite spin fillings given by the Neumann--Siebenmann invariant. See Remark \ref{Ue}. \\

We should point out that both of these theorems result in a \qq{twisting phenomenon} in the sense that if you enlarge the twist regions of the alternating link enough, you'll end up with knots whose branched double cover doesn't have a negative definite spin filling. Enlarging twist regions is equivalent to turning and edge $e=\{v_i,v_j\}$ in $W$ to a family of parallel edges between $\{v_i,v_j\}$ in a small tubular neighborhood of $e$, or in terms of the dual graph, turning a vertex to a path of vertices (See Figure \ref{twisting}).\\

Both of the inequalities can be seen as weaker versions of Elkies's condition \cite{Elkies1999ACO} i.e. non-existence of short characteristic vectors.\\
\begin{figure}[h]
\centering
\includegraphics[scale=0.32]{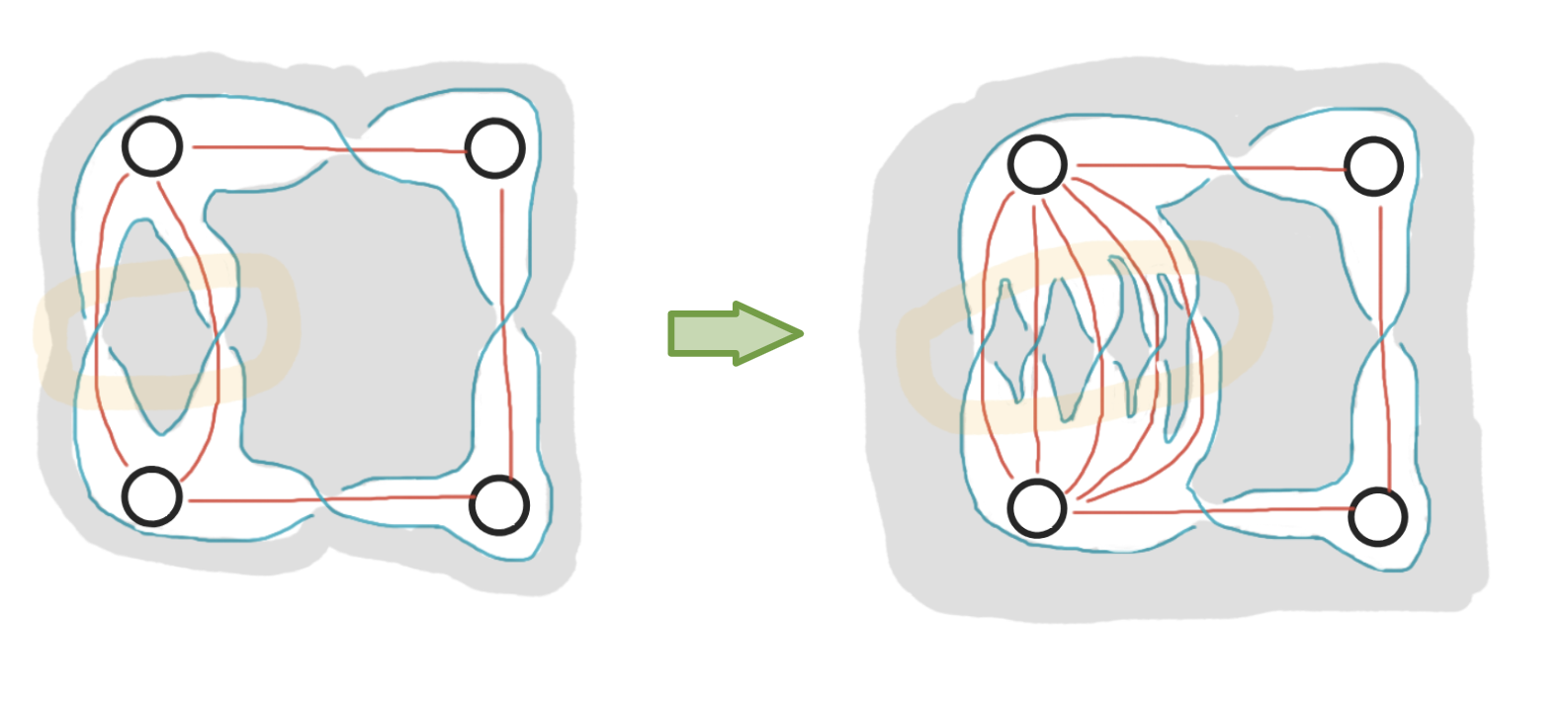}
\caption{Enlarging a twist region}\label{twisting}
\end{figure}

In general, there exist non-special alternating knots with branched \mbox{double} covers which admit simply-connected negative definite spin fillings, but it turns out that by further restricting the search to the class of plumbed 4-manifolds, one can prove a non-existence result explained in the following theorem. First, we need to explain some notations.\\

Theorem \ref{plumbed} is about algebraic links. By algebraic we mean arborescent. Based on the work of Siebenmann \cite{Sieb}, we know that these are the only links where the branched double cover has a plumbed filling. We call an algebraic link \emph{excessive}
if it is constructed from a plumbing tree $T$ with weight function $w$, satisfying the following inequality 
$$ \forall a_i \in V_T : \ w(a_i) \leq min \{-2 , -deg_{T}(a_i)\}.$$
Excessiveness is a technical condition defined by Murasugi \cite{murasugi}. We use it to ensure that the algebraic link is alternating and relate the tree $T$ to the Tait (See Lemma \ref{excessive}).

\begin{theo}\label{plumbed}
    Let K be an excessive algebraic alternating link with odd determinant. Then $\Sigma(S^3,K)$ admits a simply connected spin negative definite plumbed filling if and only if $K$ is special.
\end{theo}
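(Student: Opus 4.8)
\emph{Overall strategy.} I would prove the two implications separately, using Lemma~\ref{excessive} as the dictionary between $K$, its white Tait graph, and the weighted plumbing tree $(T,w)$, and exploiting the rigidity of negative definite plumbings under Neumann's plumbing calculus.

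\emph{If $K$ is special.} Let $F$ be the orientable checkerboard surface. Pushing the interior of $F$ into $D^4$ and taking the double branched cover gives, by Gordon and Litherland \cite{Gordon1978}, a $4$--manifold $\Sigma(D^4,F^{+})$ whose intersection form is the Goeritz form of $F$; since $F$ is orientable this form is the symmetrized Seifert form $V+V^{T}$, which has even diagonal, so $\Sigma(D^4,F^{+})$ is spin, and after the standard normalization it is negative definite. As $K$ is arborescent, Siebenmann's classification \cite{Sieb} identifies $\Sigma(S^3,K)$ with a plumbed $3$--manifold, and Lemma~\ref{excessive} identifies $\Sigma(D^4,F^{+})$ with the plumbing $X_{\Gamma}$ on the weighted tree $\Gamma$ attached to $(T,w)$; a tree plumbing is simply connected, so $X_{\Gamma}$ is the desired filling. (Alternatively one argues directly on the algebraic side: excessiveness makes the weighted tree negative definite by weak diagonal dominance, since $-w(a_i)\ge \deg_{T}(a_i)$ with a strict inequality at the leaves, while Lemma~\ref{excessive} shows that $K$ is special if and only if all weights of $\Gamma$ are even, i.e. if and only if $X_{\Gamma}$ is spin.)

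\emph{If $\Sigma(S^3,K)$ has such a filling.} Suppose $P$ is a simply connected spin negative definite plumbed filling of $\Sigma(S^3,K)$. Simple connectivity forces the underlying plumbing graph of $P$ to be a tree (with spherical base pieces). Since $\det K$ is odd, $\Sigma(S^3,K)$ is a rational homology sphere, and by \cite{Sieb} together with Lemma~\ref{excessive} it bounds the plumbing $X_{\Gamma}$ on an explicit negative definite tree $\Gamma$ whose weights satisfy $w(a_i)\le \min\{-2,-\deg_{T}(a_i)\}$ by excessiveness; in particular $\Gamma$ has no $(-1)$--vertex and is therefore a reduced plumbing graph. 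Because $P$ is spin, its intersection form is even, so every weight of the tree underlying $P$ is even; an even weight in a negative definite plumbing is necessarily $\le -2$, so that tree also has no $(-1)$--vertex and is reduced. Two reduced negative definite plumbing trees with diffeomorphic boundary coincide — this is Neumann's plumbing calculus when $\Sigma(S^3,K)$ is not a lens space, and it follows from uniqueness of the all--$\ge 2$ Hirzebruch--Jung continued fraction expansion in the lens space case (the case $\Gamma$ linear, i.e. $K$ two--bridge). Hence the tree underlying $P$ equals $\Gamma$, so all weights of $\Gamma$ are even, and by Lemma~\ref{excessive} this holds exactly when the relevant checkerboard surface is orientable, i.e. when $K$ is special.

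\emph{Main obstacle.} The delicate point is the rigidity step in the second implication: upgrading from ``the surface--built filling is $X_{\Gamma}$'' to ``\emph{every} negative definite plumbed filling reduces through the calculus to $X_{\Gamma}$'', and then reading off the parity of the weights of $\Gamma$ as the precise obstruction. This requires (i) Lemma~\ref{excessive} stated finely enough to determine the parities of the weights of $\Gamma$ from (non)specialness of $K$; (ii) care in the two--bridge / lens space case, where Neumann's normal form is not literally unique but the Hirzebruch--Jung expansion is; and (iii) verifying that ``simply connected'' together with ``plumbed'' genuinely forces a tree, so that the reduction argument applies. The remaining steps are routine manipulations with the plumbing calculus, the diagonal--dominance criterion for negative definiteness, and the Gordon--Litherland form.
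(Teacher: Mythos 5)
Your overall strategy coincides with the paper's: use Lemma~\ref{excessive} to identify the plumbing tree $T$ with the reduced white Tait graph, show that the tree underlying any simply connected spin negative definite plumbed filling must agree with it by rigidity of the plumbing calculus, and read off specialness from the parity of the weights. However, there is a genuine gap at the rigidity step. You assert that two negative definite plumbing trees with all weights $\le -2$ (``reduced'' in your sense) and diffeomorphic boundaries must coincide, with a caveat only in the lens space case. This is not Neumann's uniqueness statement: his normal form requires, in addition to $N1$ and $N2$ (no $0$- or $\pm1$-weights, chain weights $\le -2$), the condition $N3$ forbidding a vertex that carries two $(-2)$-framed leaves (outside certain exceptional components). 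Trees violating $N3$ are genuinely non-unique representatives of their boundary even though all their weights are $\le -2$, so your claimed rigidity fails exactly there. Tellingly, your argument never uses the oddness of $\det K$ beyond $\det K\neq 0$, whereas the paper uses it precisely to kill $N3$: odd determinant gives a unique spin structure on $\Sigma(S^3,K)$, hence (by Theorem~\ref{charspin}) a unique characteristic sublink in any $1$-handle-free Kirby diagram, hence a unique characteristic subgraph of the plumbing tree; but any tree containing the $N3$ configuration admits an involution on its set of characteristic subgraphs (swap the two $(-2)$-leaves in and out), forcing that set to have even cardinality, a contradiction. This parity argument is the missing idea. Your stated caveat (ii) about lens spaces is, by contrast, a non-issue: a negative definite linear chain with all weights $\le -2$ is the Hirzebruch--Jung expansion and is already unique.

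Two smaller points. First, in the ``if $K$ is special'' direction you build the filling from the orientable checkerboard surface; with the paper's conventions that surface is the \emph{black} one and its Goeritz form is positive definite, so it gives a filling of the wrong sign. The correct filling is $\Sigma(D^4,F_W^{+})$: the white surface is generally non-orientable, but when $K$ is special every vertex of $W$ has even degree, so $G_W$ is even as well as negative definite. The evenness comes from the degrees of $W$, not from orientability of the surface you push in. Second, your reduction of ``simply connected plumbed'' to ``tree of spheres'' is fine and is implicitly used by the paper as well.
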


This paper is organized as follows. In Section \ref{Sec2}, we set our basic notations and recall some of the theorems from the literature. This section will include the construction of a Kirby diagram for the white Tait filling, some facts about the Heegaard-Floer homology of $\Sigma(S^3,K)$, and some inequalities about fillings of rational homology spheres and closed spin 4-manifolds. In Section \ref{Sec3}, we discuss proofs of Theorems \ref{specialchar} and \ref{cutbound} which come from a formula for the correction term of the branched double cover. In Section~\ref{Sec4}, we discuss an algorithm of Kaplan which helps us to construct spin fillings and combine it with Furuta's 10/8 theorem to prove Theorem \ref{capbound}. In Section \ref{Sec5}, we will discuss Neumann's plumbing calculus and use it to prove Theorem~\ref{plumbed}. 
\section*{Acknowledgement}
It is a pleasure to thank my advisor, Professor András Juhász, as without his patience and guidance this project wasn't possible. I am also very grateful to Professor Marco Golla for our  helpful discussion and for pointing out Example~\ref{charsphere}.
\section{Background and Notations}\label{Sec2}
Let $K \subset S^3$ be an alternating knot and let $D$ be an alternating diagram of $K$ in the plane. Since the diagram is alternating, one can construct a checkerboard colouring of the diagram such that all the crossings have $\mu=-1$ using the notation of Gordon and Litherland \cite{Gordon1978}. The coloring will look like Figure \ref{crossing} around each crossing.\\
\begin{figure}[h]
\centering
\includegraphics[scale=0.35]{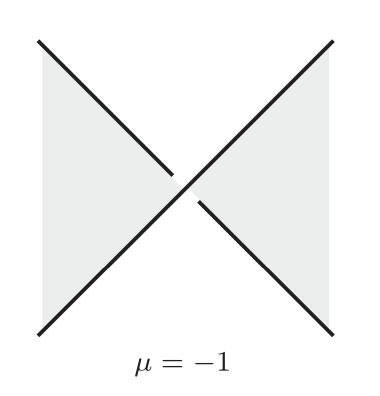}
\caption{Standard coloring of a crossing in an alternating link}\label{crossing}
\end{figure}

In this setting, one can define \emph{white and black Tait graphs and Tait surfaces}. Tait graphs are constructed by considering regions with the same color as vertices and drawing an edge between two regions if and only if they have a common crossing on their boundary. Tait surfaces come from applying median construction to Tait graphs. We use the notations $W$ and $B$ for the graphs and $F_W$ and $F_B$ for the spanning surfaces. In this paper, we assume that diagrams are always decorated, i.e., they have a marked edge or equivalently two marked adjacent regions. We refer to the graphs resulting from deleting the vertices associated to the marked regions as the reduced Tait graphs, and denote them by $\widetilde{B}$ and $\widetilde{W}$. An alternating knot is called \emph{special} if the black Tait surface is orientable, i.e., a Seifert surface. This is equivalent to the black Tait graph being bipartite. Since black and white Tait graphs are dual planar graphs, this definition is also equivalent to the white Tait graph having no vertex with odd degree. Here is an example of a special alternating knot and its Tait graphs:\\
\begin{figure}[!ht]
    \centering
    \subfloat{{\includegraphics[width=3.9cm]{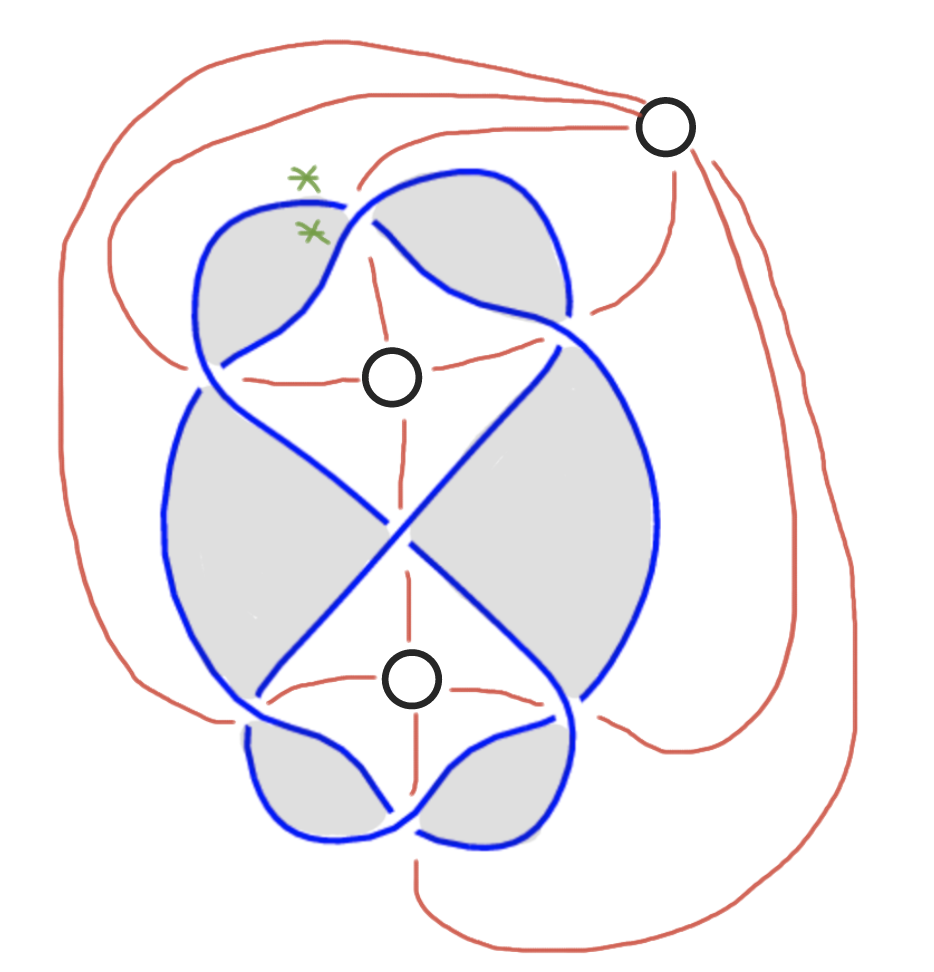} }}
    \qquad
    \subfloat{{\includegraphics[width=2.7cm]{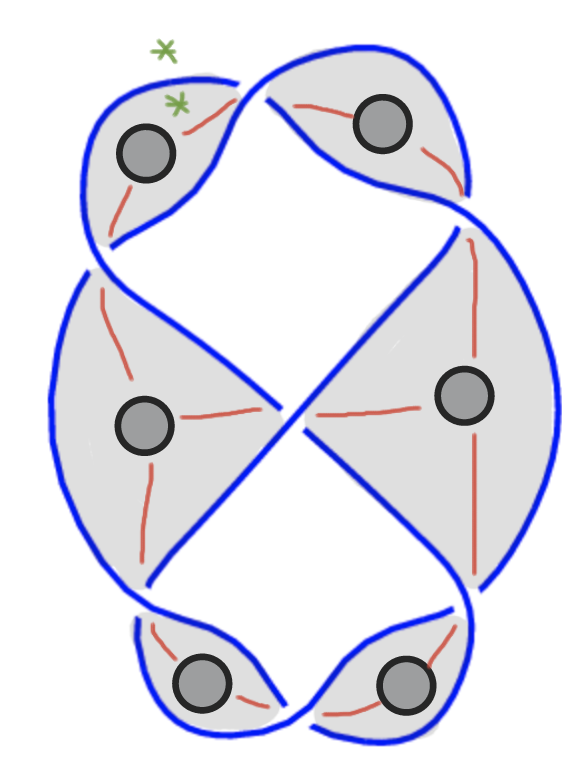} }}
    \label{fig:example}%
\caption{A special alternating knot and its Tait graphs}\label{specialalt}
\end{figure}
\FloatBarrier

As mentioned in the introduction and by works of Gordon and Litherland \cite{Gordon1978}, the branched double cover of $D^4$ over the white Tait surface $F_{W}$, which is denoted by $\Sigma(D^4,F_W)$, is a negative definite filling of $\Sigma(S^3,K)$. The intersection form of $\Sigma(D^4,F_W)$ turns out to be the Goeritz form of $F_W$. There is a combinatorial description of the Goertiz form of the white Tait surface of an alternating knot. Enumerate the vertices of $\widetilde{W}$ by $v_1,\dots, v_n$ and set 
$$g_{ij} := |E_{W}(v_i ,v_j)| \ \text{for} \ i \neq j \ and \ g_{ii} = -deg_{W}(v_i).$$
Then the white Goeritz matrix $G_W:=(g_{ij})$ represents the Goeritz form. Note that this is the definition of the Laplacian matrix of the graph $W$ with the row and column associated with the marked vertex deleted. For the example, in Figure \ref{specialalt},\\
\[
G_W
=
\begin{pmatrix}
    -4 & 1\\
     1 & -4
\end{pmatrix}.
\]
Now we are going to describe a Kirby diagram of $\Sigma(D^4,F_W^{+})$ which also acts as a diagram for the $\Sigma(S^3,K)$. In the rest of the article, we refer to this construction as \emph{Tait surgery diagram}. This diagram originates from \cite{SObranch}.  \\

We consider an unknot component with framing $g_{ii}$ centred around each $v_i \in V_{\widetilde{W}}$ and then add a positive clasp between the unknot components corresponding to $v_i$ and $v_j$ for each edge $e \in E_{\widetilde{W}}$ between $v_i$ and $v_j$. The intersection form of this Kirby diagram is clearly the same as $G_W$. Applying this to the example in Figure \ref{specialalt} gives us Kirby diagram in Figure \ref{Tait-Kirby}. 
\begin{figure}[h]
\centering
\includegraphics[scale=0.4]{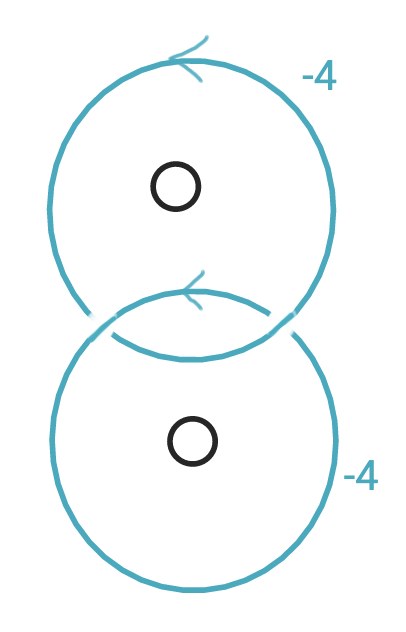}
\caption{Kirby diagram of the branched double cover}\label{Tait-Kirby}
\end{figure}\\
Greene \cite{Greenebrancheddouble} derives a Heegaard triple $\mathcal{H}_1$ subordinate to this surgery diagram and in combination with another Heegaard triple $\mathcal{H}_2$ originating from the Montesinos trick, he gives a combinatorial description of $\widehat{HF}(\Sigma(S^3,K))$. We only need some of Greene's results about alternating links which we will summarize in the following. \\

Given a Kauffman state $x$ for $K$, we induce an orientation on the white graph $W$ in the following way. Given an edge $e \in E_W$, consider the crossing $c$ to which it corresponds, as well as the white region which abuts $c$ and lies on the same side of the over-strand as $x(c)$. We direct $e$ to point towards the vertex corresponding to this white region. At a vertex $v \in V_{\widetilde{W}}$, we compute the signed degree $d_x(v)$ as the number of edges directed into $v$ minus the number of edges directed out of $v$, with respect to this orientation on $W$. Define $v^{W}_{x} := (d_x(v_1), \dots , d_x(v_n))^{T}$. Let $q(v)$ denote the quadratic form $v^TG_{W}^{-1}v$. A \emph{characteristic covector} $v \in \mathbb{Z}^m$ is defined by the condition
that $v_i \equiv (G_W)_{ii} \ mod  \ 2 \  \forall i$.
\begin{theo}\label{Greene1}\cite{Greenebrancheddouble}
    Let K denote a non-split alternating link. Then $\Sigma(S^3,K)$ is an $L$--space. Kauffman states of $K$ are in one-to-one correspondence with $Spin^{\mathbb{C}}$ structures on $\Sigma(S^3,K)$ and contribute a $\mathbb{Z}$ summand to $\widehat{HF}(\Sigma(S^3,K))$. Also, $Spin^{\mathbb{C}}(\Sigma(S^3,K))$ can be identified with $2im(G_W)$--orbits of characteristic covectors for $G_W$. Let $\mathfrak{t}(x)$ denote the $Spin^{\mathbb{C}}$ structure corresponding to $x$. The correction term can be computed by the formula
    $$d(\Sigma(S^3,K),\mathfrak{t}(x))=max_{v \in \mathfrak{t}(x)} \frac{q(v)+m}{4}=\frac{q(v^{W}_{x})+m}{4},$$
    where $m=|V_W|-1$. Furthermore, when $det(K)$ is odd, the first Chern class $c_1$ is a canonical identification of $Spin^{\mathbb{C}}(\Sigma(S^3,K))$ and $coker(G_W)$. This identification also satisfies the following: 
    $$c_1(\mathfrak{t}(x)) = [v^{W}_{x}] \in coker(G_W).$$
    Note that $H^2(\Sigma(S^3,K))$ can be identified with $coker(G_W)$ using the Tait surgery diagram. 
\end{theo}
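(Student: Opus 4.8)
\medskip
\noindent\textbf{Proof strategy.} The plan is to separate the homological content from the Heegaard Floer input and to obtain the latter from a combinatorial model of $\widehat{HF}(\Sigma(S^3,K))$ built from the Tait surgery diagram. Write $Y=\Sigma(S^3,K)$ and let $X_{G_W}$ be the $4$-manifold obtained from $D^4$ by attaching a $2$-handle along each component of the Tait surgery link, so that $\partial X_{G_W}=Y$, the intersection form of $X_{G_W}$ is $G_W$, and for a reduced alternating diagram $G_W$ is negative definite (the Gordon--Litherland fact recalled above). Standard Kirby calculus gives $H^2(X_{G_W})\cong\mathbb{Z}^m$ and $H^2(Y)\cong coker(G_W)$, and identifies $Spin^{\mathbb{C}}(X_{G_W})$ via $\mathfrak{s}\mapsto c_1(\mathfrak{s})$ with the set of characteristic covectors of $G_W$. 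Restricting a $Spin^{\mathbb{C}}$ structure to $Y$ reduces a covector modulo $im\bigl(H^2(X_{G_W},\partial X_{G_W})\to H^2(X_{G_W})\bigr)=im(G_W)$, while two $Spin^{\mathbb{C}}$ structures on $Y$ have Chern classes differing by $2H^2$; hence $Spin^{\mathbb{C}}(Y)$ is identified with the set of $2\,im(G_W)$-orbits of characteristic covectors, which has $|\det G_W|=\det K$ elements. On the other hand the Kauffman states of the connected diagram $D$ biject with the spanning trees of $W$, so by the matrix--tree theorem they also number $|\det G_W|=\det K$. Thus a bijection between Kauffman states and $Spin^{\mathbb{C}}$ structures exists on cardinality grounds, and the task is to realise it by $x\mapsto[v^{W}_{x}]$ (noting that $v^{W}_{x}$ is characteristic, since $d_x(v)$ and $deg_W(v)=-(G_W)_{vv}$ have the same parity at each vertex).

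The substantive input is a combinatorial model for $\widehat{HF}(Y)$: following \cite{Greenebrancheddouble} one takes the Heegaard triple $\mathcal{H}_1$ subordinate to the Tait surgery diagram (together with the Montesinos-trick triple $\mathcal{H}_2$ used to normalise absolute $\mathbb{Q}$-gradings) and identifies the resulting Floer complex with a complex whose generators are the Kauffman states of $D$. Two things must be extracted from this model. First, the $Spin^{\mathbb{C}}$ label of the generator $x$ is $[v^{W}_{x}]$: this is read off from the surgery cobordism $W_0:=X_{G_W}\setminus\mathrm{int}(B^4)$, regarded as a cobordism from $S^3$ to $Y$, which carries the generator of $\widehat{HF}(S^3)$ to the generator labelled $x$ in the $Spin^{\mathbb{C}}$ structure $\mathfrak{s}$ with $c_1(\mathfrak{s})=v^{W}_{x}$. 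Second, the Maslov grading of $x$ is the image grading $0$ shifted by the cobordism degree $\tfrac14\bigl(c_1(\mathfrak{s})^2-2\chi(W_0)-3\sigma(W_0)\bigr)$; since $\chi(W_0)=m$, $\sigma(W_0)=-m$, and $c_1(\mathfrak{s})^2=v^{T}G_W^{-1}v=q(v^{W}_{x})$, this shift equals $\tfrac14\bigl(q(v^{W}_{x})+m\bigr)$.

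These are chain-level statements; I would pass to homology as follows. Because the $\mathcal{H}_1$-complex computes $\widehat{HF}(Y)$ and $\widehat{HF}(Y,\mathfrak{t})\ne0$ for every $\mathfrak{t}\in Spin^{\mathbb{C}}(Y)$, each $Spin^{\mathbb{C}}$ structure is the label of at least one Kauffman state; with $\det K$ generators and $\det K$ structures this forces $x\mapsto[v^{W}_{x}]$ to be a bijection, each $Spin^{\mathbb{C}}$ block of the complex to contain a single generator, the differential to vanish, and $\widehat{HF}(Y,\mathfrak{t}(x))$ to be the $\mathbb{Z}$ generated by $x$ --- in particular $Y$ is an $L$-space. (Alternatively the $L$-space property may be cited directly from the unoriented skein exact triangle for branched double covers of alternating links.) Consequently $d(Y,\mathfrak{t}(x))$ equals the Maslov grading of $x$, namely $\tfrac14\bigl(q(v^{W}_{x})+m\bigr)$. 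Ozsváth and Szabó's inequality for negative-definite fillings, applied to $X_{G_W}$, gives $q(v)+m\le 4\,d(Y,\mathfrak{t}(x))$ for every characteristic covector $v$ representing $\mathfrak{t}(x)$, so $d(Y,\mathfrak{t}(x))=\tfrac14\bigl(q(v^{W}_{x})+m\bigr)=max_{v\in\mathfrak{t}(x)}\tfrac14\bigl(q(v)+m\bigr)$, i.e.\ $v^{W}_{x}$ attains the maximum. Finally, when $\det K$ is odd, $|coker(G_W)|$ is odd, multiplication by $2$ is an automorphism of $coker(G_W)$, and the reduction of characteristic covectors modulo $2\,im(G_W)$ into $coker(G_W)$ is a bijection; under $H^2(Y)\cong coker(G_W)$ this is precisely $\mathfrak{t}\mapsto c_1(\mathfrak{t})$, and it sends $\mathfrak{t}(x)$ to $[v^{W}_{x}]$ by the above.

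I expect the real obstacle to be the construction and analysis of the triple $\mathcal{H}_1$: matching its intersection points with Kauffman states, computing the induced $Spin^{\mathbb{C}}$ labels, and --- most delicately --- computing the Maslov grading of the generator $x$ via a holomorphic-triangle class of the correct Maslov index realising the cobordism $W_0$ (the statement that the Tait filling is \emph{sharp}). Everything else is the homological bookkeeping above, the basic nonvanishing $\widehat{HF}(Y,\mathfrak{t})\ne0$, and an invocation of the negative-definite inequality. As a cross-check one could also bound $d(Y,\mathfrak{t})$ from above by applying that inequality to (minus) the black Tait filling and using $d(-Y,\cdot)=-d(Y,\cdot)$; matching the two bounds would amount to a lattice identity relating the white and black Goeritz forms of $D$, giving an alternative derivation of the formula.
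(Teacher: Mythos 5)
This statement is quoted background: the paper attributes it entirely to Greene \cite{Greenebrancheddouble} (building on Ozsv\'ath--Szab\'o's work on branched double covers of alternating links) and offers no proof of its own, so there is nothing in the paper to compare your argument against line by line. On its own terms your sketch is a faithful reconstruction of the standard argument. The homological bookkeeping is all correct: the identification of $Spin^{\mathbb{C}}(\Sigma(S^3,K))$ with $2\,im(G_W)$-orbits of characteristic covectors via the handlebody $X_{G_W}$, the count Kauffman states $=$ spanning trees of $W$ $=|\det G_W|=\det K$, the grading-shift computation $\tfrac14(c_1^2-2\chi-3\sigma)=\tfrac14(q(v)+m)$ with $\chi(W_0)=m$, $\sigma(W_0)=-m$, the pigeonhole argument forcing $x\mapsto[v^{W}_{x}]$ to be a bijection with one generator per $Spin^{\mathbb{C}}$ class (hence the $L$-space property and $d(Y,\mathfrak{t}(x))$ equal to the Maslov grading of $x$), the use of Theorem \ref{OSineq} to upgrade the equality to the stated maximum over the orbit, and the observation that $c_1$ is a bijection onto $coker(G_W)$ when the determinant is odd because doubling is an automorphism of an odd-order group.

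The one caveat is that the entire analytic and combinatorial weight of the theorem sits in the step you explicitly defer: establishing that the generators of the complex associated to $\mathcal{H}_1$ are labelled by Kauffman states with $Spin^{\mathbb{C}}$ label $[v^{W}_{x}]$, and that the Tait filling is \emph{sharp}, i.e.\ that the cobordism map realises the grading $\tfrac14(q(v^{W}_{x})+m)$ on the nose. You correctly flag this as ``the real obstacle,'' but as written your text is an outline that reduces the theorem to exactly the content of \cite{Greenebrancheddouble} rather than a self-contained proof --- which, for a cited background theorem, is an entirely reasonable outcome, and is consistent with how the paper itself treats the result.
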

We are going to use these formulas to obstruct branched double covers from having spin negative definite fillings. To accomplish this, we are going to use known inequalities about fillings of rational homology spheres and closed spin 4-manifolds. We recall some of the theorems that we are going to use later. 
\begin{theo}\label{OSineq}\cite{OSineq}
Let $Y$ be a rational homology three-sphere, and fix a $Spin^{\mathbb{C}}$ structure $\mathfrak{t}$ over $Y$. Then, for each smooth, negative definite filling $X$ of $Y$, and for each $\mathfrak{s} \in Spin^{\mathbb{C}}(X)$ with $\mathfrak{s}|_{Y}=\mathfrak{t}$, we have that 
$$c_1(\mathfrak{s})^2+b_2(X) \leq 4d(Y,\mathfrak{t}).$$
\end{theo}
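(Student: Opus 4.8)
The plan is to prove this via Ozsv\'ath--Szab\'o's cobordism maps on Heegaard Floer homology, exactly as in \cite{OSineq}; since every filling used later in this paper is simply connected, I will present the argument under the harmless extra hypothesis $b_1(X)=0$ and only remark on the general case at the end. First I would puncture $X$: let $W$ be the cobordism from $S^3$ to $Y$ obtained by deleting an open ball from $X$, carrying the restriction of $\mathfrak{s}$ (which restricts to $\mathfrak{t}$ on $Y$ and to the unique structure on $S^3$). This produces graded homomorphisms $F^{\circ}_{W,\mathfrak{s}}\colon HF^{\circ}(S^3)\to HF^{\circ}(Y,\mathfrak{t})$ for $\circ\in\{\infty,+\}$, each homogeneous of the same degree $\Delta:=\tfrac14\big(c_1(\mathfrak{s})^2-2\chi(W)-3\sigma(W)\big)$, and intertwining the natural long exact sequence map $\pi\colon HF^{\infty}\to HF^{+}$. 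Because $b_1(X)=0$ and $Y$ is a rational homology sphere one has $\chi(W)=\chi(X)-1=b_2(X)$, and because $X$ is negative definite $\sigma(W)=\sigma(X)=-b_2(X)$; substituting gives the clean formula $\Delta=\tfrac14\big(c_1(\mathfrak{s})^2+b_2(X)\big)$.

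The essential input, and the step I expect to be the real obstacle, is that $b_2^{+}(W)=b_2^{+}(X)=0$ forces $F^{\infty}_{W,\mathfrak{s}}$ to be nontrivial; since $HF^{\infty}(S^3)$ and $HF^{\infty}(Y,\mathfrak{t})$ are both free of rank one over $\mathbb{Z}[U,U^{-1}]$, nontriviality upgrades to $F^{\infty}_{W,\mathfrak{s}}$ being an isomorphism. I would quote this from \cite{OSineq}: $F^{\infty}_{W,\mathfrak{s}}$ vanishes when $b_2^{+}(W)>0$ and is an isomorphism when $b_2^{+}(W)=0$, the non-vanishing in the definite case being proved by decomposing $W$ into handles and running the surgery exact triangle. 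Everything else below is formal; this is where the holomorphic-disk theory and the structure of $HF^{\infty}$ genuinely enter.

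Granting that, the inequality drops out of a grading chase. Recall that $HF^{+}(S^3)=\mathcal{T}^{+}$ has its bottom class in degree $0$, so every nonzero homogeneous element of $HF^{+}(S^3)$ has non-negative grading; and, by the definition of the correction term, the image of $\pi\colon HF^{\infty}(Y,\mathfrak{t})\to HF^{+}(Y,\mathfrak{t})$ is a tower whose bottom class $z$ lies in degree $d(Y,\mathfrak{t})$. Write $z=\pi(\tilde z)$ with $\tilde z\in HF^{\infty}(Y,\mathfrak{t})$ homogeneous of degree $d(Y,\mathfrak{t})$, and use the isomorphism $F^{\infty}_{W,\mathfrak{s}}$ to choose $w\in HF^{\infty}(S^3)$ with $F^{\infty}_{W,\mathfrak{s}}(w)=\tilde z$; then $w$ is homogeneous of degree $d(Y,\mathfrak{t})-\Delta$. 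Compatibility with $\pi$ gives $F^{+}_{W,\mathfrak{s}}\big(\pi(w)\big)=\pi\big(F^{\infty}_{W,\mathfrak{s}}(w)\big)=z\neq 0$, so $\pi(w)$ is a nonzero homogeneous element of $HF^{+}(S^3)$ and hence has grading $\geq 0$; since $\deg\pi(w)=\deg w=d(Y,\mathfrak{t})-\Delta$, we conclude $d(Y,\mathfrak{t})\geq\Delta=\tfrac14\big(c_1(\mathfrak{s})^2+b_2(X)\big)$, which is the asserted bound. I would close by noting that the identical chase, carried out for any cobordism $W\colon Y_1\to Y_2$ between rational homology spheres with $b_1(W)=0$ and $b_2^{+}(W)=0$, yields the cobordism inequality $d(Y_2,\mathfrak{s}|_{Y_2})\geq d(Y_1,\mathfrak{s}|_{Y_1})+\tfrac14\big(c_1(\mathfrak{s})^2+b_2^{-}(W)\big)$, of which the theorem is the case $Y_1=S^3$, and that the general case $b_1(X)>0$ proceeds the same way using totally twisted coefficients, as in \cite{OSineq}.
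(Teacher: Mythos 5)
This is a background result that the paper imports from Ozsv\'ath--Szab\'o \cite{OSineq} without proof, so there is no internal argument to compare against; your sketch is a correct reconstruction of the original proof (puncture $X$ to a cobordism from $S^3$, the degree-shift formula giving $\Delta=\tfrac14(c_1(\mathfrak{s})^2+b_2(X))$, the isomorphism on $HF^{\infty}$ for $b_1=b_2^{+}=0$ cobordisms, and the grading chase against $d(Y,\mathfrak{t})$). One small caveat: ``nontrivial hence isomorphism'' is not automatic for a map of free rank-one $\mathbb{Z}[U,U^{-1}]$-modules with $\mathbb{Z}$ coefficients (multiplication by $2$ is nontrivial but not surjective, and you do need surjectivity to lift $\tilde z$), but since you quote the isomorphism statement itself from \cite{OSineq} rather than deriving it from mere nonvanishing, the argument goes through.
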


\begin{theo}\label{Furuta}\cite{Furuta}
If $M$ is a closed spin manifold with indefinite intersection form, then $$b_2(M) \geq \frac{10}{8}|\sigma(M)|+2.$$
\end{theo}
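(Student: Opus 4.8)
This is Furuta's ``$10/8$ theorem'', and I would prove it by Seiberg--Witten theory, finite-dimensional approximation, and a $Pin(2)$-equivariant estimate. \emph{Reductions.} Reversing orientation if needed, assume $\sigma(M)\le 0$; performing surgery on embedded loops (with framings compatible with the spin structure) we may also assume $b_1(M)=0$, which changes neither the intersection form nor the statement. Since the spinor bundles of a spin $4$-manifold are quaternionic, $\operatorname{ind}_{\mathbb C}D^{+}=-\sigma(M)/8$ is even; set
$$ k\ :=\ -\frac{\sigma(M)}{16}\ \in\ \mathbb{Z}_{\ge 0}, $$
the quaternionic Dirac index (its integrality is Rokhlin's congruence $\sigma\equiv 0\bmod 16$). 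Using $b_2(M)=2b^{+}(M)-\sigma(M)$, the desired inequality $b_2(M)\ge\frac{10}{8}|\sigma(M)|+2$ is equivalent to
$$ b^{+}(M)\ \ge\ 2k+1 . $$
Under the running hypotheses ``indefinite'' means precisely $b^{+}(M)\ge 1$, so the case $k=0$ is trivial and we assume $k\ge 1$.

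\emph{Gauge theory.} Set up the Seiberg--Witten equations for the spin structure. The quaternionic structure on spinors together with charge conjugation makes the configuration space and the monopole map equivariant for $Pin(2)=S^{1}\cup jS^{1}\subset\mathbb{H}$, where $S^{1}$ is the gauge action and $j$ acts by the quaternionic structure on spinors and by $-1$ on self-dual $2$-forms. Applying Furuta's finite-dimensional approximation (the Bauer--Furuta construction) to the monopole map --- restricting to large balls in finite-dimensional $Pin(2)$-invariant subspaces of the relevant Sobolev completions and one-point compactifying --- yields a based $Pin(2)$-equivariant map of representation spheres
$$ f\colon\ \bigl(\mathbb{H}^{a}\oplus\widetilde{\mathbb R}^{\,b}\bigr)^{+}\ \longrightarrow\ \bigl(\mathbb{H}^{a-k}\oplus\widetilde{\mathbb R}^{\,b+b^{+}(M)}\bigr)^{+} $$
for $a,b$ large, where $\widetilde{\mathbb R}$ is the sign representation of $Pin(2)/S^{1}=\mathbb{Z}/2$; the quaternionic index drop by $k$ comes from $D^{+}$ and the increase of $\widetilde{\mathbb R}$-summands by $b^{+}(M)$ comes from $d^{+}$ (here $b_1(M)=0$ is used). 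Two geometric facts are essential: $f$ restricts to a homeomorphism on the $Pin(2)$-fixed sets $S^{0}\to S^{0}$, and on the $S^{1}$-fixed sets it is the suspension of an injective linear map $\widetilde{\mathbb R}^{\,b}\to\widetilde{\mathbb R}^{\,b+b^{+}(M)}$.

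\emph{The equivariant estimate, and the main obstacle.} It remains to prove the purely representation-theoretic fact that any based $Pin(2)$-map $f$ of the above type, nontrivial on $Pin(2)$-fixed points, forces $b^{+}(M)\ge 2k+1$; granting this and combining with the reductions gives $b_2(M)=2b^{+}(M)+16k\ge 2(2k+1)+16k=20k+2=\frac{10}{8}|\sigma(M)|+2$. I would establish the inequality in $Pin(2)$-equivariant complex $K$-theory: record the $K$-classes cut out by $f$ on the ambient spheres and on the $S^{1}$-fixed loci, use that $\mathbb{H}|_{S^{1}}$ is a sum of weight-$1$ lines while $\widetilde{\mathbb R}|_{S^{1}}$ is trivial, and combine Bott periodicity with the divisibility and nilpotence relations among the Euler classes of $\mathbb{H}$ and $\widetilde{\mathbb R}$ in $\widetilde K_{Pin(2)}(\mathrm{pt})$ (alternatively, phrase everything through the Bauer--Furuta stable cohomotopy invariant and known $Pin(2)$-equivariant stable homotopy computations). \emph{This last step is the crux and the hard part.} Index theory alone, or the existence of an underlying non-equivariant (or merely $S^{1}$-equivariant) map, only delivers the weaker ``$8/8$'' bound $b^{+}(M)\ge 2k$; extracting the extra ``$+1$'' --- equivalently the coefficient $\tfrac{10}{8}$ rather than $\tfrac{8}{8}$ --- is exactly what requires the full $Pin(2)$-action and Furuta's careful analysis of maps between these particular representation spheres. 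The analytic input (Weitzenb\"ock-type a priori estimates for the compactness needed in the approximation) and the index bookkeeping are by contrast standard.
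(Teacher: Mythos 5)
The paper does not prove this statement; it is imported verbatim from Furuta's paper as a black-box input (Theorem \ref{Furuta} is used only once, in the proof of Theorem \ref{capbound}), so there is no in-paper argument to compare against. Judged on its own terms, your outline is a faithful and accurate roadmap of Furuta's actual proof: the reduction to $b_1=0$ by spin surgery, the translation of the inequality into $b^{+}(M)\ge 2k+1$ with $k=-\sigma(M)/16$ (using Rokhlin to get integrality, and the observation that indefiniteness disposes of $k=0$), the $Pin(2)$-equivariant Bauer--Furuta finite-dimensional approximation producing a map of representation spheres $(\mathbb{H}^{a}\oplus\widetilde{\mathbb R}^{\,b})^{+}\to(\mathbb{H}^{a-k}\oplus\widetilde{\mathbb R}^{\,b+b^{+}})^{+}$ that is non-null on fixed-point sets, and the identification of the equivariant nonexistence statement as the source of the coefficient $10/8$ rather than $8/8$.

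The genuine gap is the one you flag yourself: the representation-theoretic inequality ``such a $Pin(2)$-map forces $b^{+}\ge 2k+1$'' is asserted, not proved, and it is not a routine verification --- it \emph{is} the theorem. Everything before it (index bookkeeping, Weitzenb\"ock estimates, compactness) genuinely is standard, but the $K_{Pin(2)}$-theoretic argument with tom Dieck's character formula and the relations among the Euler classes of $\mathbb{H}$ and $\widetilde{\mathbb R}$ occupies the core of Furuta's paper and cannot be waved through with ``divisibility and nilpotence relations.'' So as a proof the proposal is incomplete at its central step; as a description of the correct strategy it is sound, and for the purposes of this paper (which only cites the result) citing \cite{Furuta} is the right move.
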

\section{Bounds from correction terms}\label{Sec3}
Now we are ready to prove Theorem \ref{specialchar} and Theorem \ref{cutbound}. 
\begin{proof}[Proof of Theorem \ref{specialchar}]
Assume $X$ is a simply connected, spin, negative definite filling of $Y=\Sigma(S^3,K)$. Recall that $Spin^{\mathbb{C}}$ structures of $X$ correspond to integral lifts of the second Stiefel--Whitney class under the second map in the exact sequence $$H^2(X;\mathbb{Z})\xrightarrow{\times 2} H^2(X;\mathbb{Z})\rightarrow H^2(X;\mathbb{Z}_2)\xrightarrow{\beta} \cdots.$$ The Chern class of a $Spin^{\mathbb{C}}$ structure can be computed as a further lift of the Stiefel--Whitney class to the first group in the sequence. Since $X$ is Spin, the second Stiefel--Whitney class vanishes and, as a result, one can find a trivial lift $\mathfrak{s} \in Spin^{\mathbb{C}}(X)$ with $c_1(\mathfrak{s})=0$. Based on Theorem \ref{Greene1}, there is a Kuaffman state $x$ such that $[v^W_{x}] \in coker(G_W)$ is identified with $\mathfrak{s}|_{Y} \in Spin^{\mathbb{C}}(\Sigma(S^3,K))$. Using Theorem \ref{OSineq}, we can write
$$b_2(X) \leq 4d(Y,\mathfrak{s}|_{Y}) = q(v^{W}_{x})+m \leq m.$$
The last inequality follows from the fact that $q$ is a negative definite form as its defined using inverse of Goeritz matrix.\\

Now we are going to prove that the last inequality is sharp if $K$ is not special. This again comes from Theorem \ref{Greene1}. We only need to show that, for all Kauffman states $x$ on a non-special alternating knot, $q(v^W_x) < 0$. Since $q$ is negative definite, we only need to prove that $v^W_x \neq 0$. This follows from the fact that, for non-special knots, $W$ contains at least two vertices with odd degrees, since it's dual can't be bipartite. As a result, there exist $v_i \in V_{\widetilde{W}}$ such that $deg_W(v_i)$ is odd. On the other hand, $v^W_x$ is a characteristic covector; i.e.,
$$v^W_x=(d_x(v_i))=(d^{+}_{x}(v_i)-d^{-}_{x}(v_i))\equiv (deg_W(v_i)) = -g_{ii} \  mod \ 2.$$

Finally, we need to show that, for special alternating knots, there exists a simply connected, spin and negative definite filling of $\Sigma(S^3,K)$ with $b_2=m$. The 4-manifold $X=\Sigma(D^4,F_W^{+})$ satisfies these conditions. Using the Tait surgery diagram (which is a Kirby diagram of $X$), one can see that $X$ is simply connected with $b_2(X)=m$ and negative definite with intersection form $Q_X=G_W$. Furthermore $Q_X$ is even as $g_{ii} = deg_W(v_i)$ which is even since $K$ is special. Hence, $X$ is spin as well. 
\end{proof}
\begin{proof}[Proof of Theorem \ref{cutbound}]
This proof is similar to the previous one. Based on Theorem \ref{Greene1}, there is a Kuaffman state $x$ such that $[v^W_{x}] \in coker(G_W)$ is identified with $\mathfrak{s}|_{Y} \in Spin^{\mathbb{C}}(\Sigma(S^3,K))$. Using Theorem \ref{OSineq}, we can write
$$b_2(X) \leq 4d(Y,\mathfrak{s}|_{Y}) = q(v^W_{x}) + m.$$
Note that $\mathfrak{s}$ is induced by a $Spin$ structure on $X$, and, as a result, $\mathfrak{s}|_{Y}$ is also the $Spin^{\mathbb{C}}$ structure induced by the unique $Spin$ structure on $\Sigma(S^3,K)$~. The uniqueness of the $Spin$ structure follows from the assumption that $det(K)=|H_1(\Sigma(S^3,K) ; \mathbb{Z})|$ is odd and, as a result, has no 2-torsion and $H^1(\Sigma(S^3,K) ; \mathbb{Z}_2)$ vanishes. Based on this argument and the one made in the first lines of the proof of Theorem \ref{specialchar}, we can deduce that $c_1(\mathfrak{s}|_{Y})=0$.\\

We will show that the inequality in the statement of Theorem \ref{cutbound} results in $d(Y,\mathfrak{s}|_{Y})$ being negative. Based on Theorem \ref{Greene1}, we know that $Spin^{\mathbb{C}}(\Sigma(S^3,K))$ is identified with $coker(G_W)$ through the first Chern class. As a result, $[v^W_{x}]=[0]=[c_1(s|_{Y})] \in coker(G_W)$, which means that there exists $y \in \mathbb{Z}^m$ such that $G_{W}y=v^W_{x}$. Note that we can rewrite $$q(v^W_{x}) = (v^W_{x})^T G_W^{-1} v^W_{x} = y^T G_W^T y= y^T G_W y.$$ We know that $v^W_{x}$ is a characteristic covector and by definition we have
\begin{equation}\label{characteristic}
(v^W_{x})_i \equiv -g_{ii} \ mod \ 2.
\end{equation}
We can rewrite Equation \ref{characteristic} as 
$$(v^W_{x})_i = (G_Wy)_{i} = \Sigma_{j} g_{ij}y_j \equiv deg_W(v_i) = -g_{ii} \ mod \ 2.$$
Let $y'\in\mathbb{Z}^m$ be the vector defined as $y'_i = (y_i \ mod \ 2) \in \{0,1\}$. Let $J$ be the support of $y'$. We use $C$ to denote the subgraph of $W$ induced by $v_j$ for $j \in J$. Then one can see that 
$$deg_W(v_i) \equiv (G_Wy)_{i} \equiv (G_Wy')_i \equiv e_{W}(v_i,C)\ mod \ 2.$$
The last equality follows from the properties of the Laplacian matrix. Indeed, we have that 
$$(G_Wy')_i = \sum_{j \in J} g_{ij} = \sum_{j \in J-\{i\}} g_{ij} + \sum_{i \in J} g_{ii} = \sum_{j \in J} |E(\{v_i\},\{v_j\})| - \sum_{i \in J} deg_W(v_i),$$
which is equal to $e(v_i , C)$ mod 2. As a result, $C$ is a characteristic subgraph. Note that, since we assume the knot to be non-special, $C$ can't be empty.\\

We can also use the interpretation of $G_W$ as a submatrix of the Laplacian of $W$ to reformulate $y^TG_Wy$ as the following sum. Assume that $L_W$ is the full Laplacian of $W$ with the $(m+1)$-th (last) row and column associated to the distinguished vertex and set $y_{m+1}=0$. Then we have
\begin{equation}\label{laplaciansum}
    y^T G_W y = [y^T \ 0] L_W \begin{bmatrix} y \\ 0 \end{bmatrix} = -\Sigma_{\{v_i,v_j\}\in E_W} (y_i-y_j)^2.
\end{equation}
Based on the definition of $C$, we will have $y_i \neq y_j$ for $v_i \in V_C$ and ${v_j \in V_W \setminus V_C}$. Combined with Equation \ref{laplaciansum}, we will have that
$$q(v^W_{x}) =y^T G_W y \leq -|E(V_C,V_W \setminus V_C)|.$$
Combining this with the statement of the Theorem \ref{cutbound}, we have that ${q(v^W_{x})\leq -m}$, which gives us the result that we want.
\end{proof}
\begin{rema}
As we will mention in the next section, the definition of characteristic subgraph is an analogue of the definition of characteristic sublink in a Kirby diagram. Characteristic sublinks in a Kirby diagram of $Y$ are in one-to-one correspondence with $H^1(Y;\mathbb{Z}_2)$ and, as a result, in the setting of Theorem \ref{cutbound}, there is only one characteristic sublink in the white Tait graph.
\end{rema}
\section{Bound from Furuta's 10/8 theorem}\label{Sec4}
It's well known that the third spin cobordism group vanishes. This means that any spin 3-manifold $(Y,\mathfrak{t})$ has a spin filling; i.e., there exists a spin 4-manifold $(W,\mathfrak{s})$ such that $\partial W=Y$ and  $\mathfrak{s}|_{Y}=\mathfrak{t}$. In fact, Kaplan \cite{Kaplan} built an algorithm that turns any Kirby diagram which doesn't contain any 1-handles for $Y$ to a diagram of a spin filling through Kirby calculus. We will recall this algorithm from its fantastic exposition in the book by Gompf and Stipsicz \cite{GompfStip}. 

\begin{theo}[\cite{GompfStip}]\label{charspin}
Assume that we have a Kirby diagram $L$ of a 3-manifold $Y$ which doesn't contain any 1-handles. This also gives us a handlebody filling $X_L$ of $Y$. For any spin structure $\mathfrak{t}$ on $Y$, the obstruction for extending $\mathfrak{t}$ to $X$ is the relative Steifel--Whitney class $w_2(X,\mathfrak{t}) \in H^2(X,Y;\mathbb{Z}_2)$, which gives a bijection between spin structures on $Y$ and characteristic sublinks of $L$. (Each $L' \subset L$ defines an element of $H^2(X,Y;\mathbb{Z}_2)$ with $[H_i] \rightarrow \lk(L_i,L) \ mod \ 2$, where $[H_i] \in H_2(X)$ comes from capping off core of the 2-handle attached along $L_i$.)
\end{theo}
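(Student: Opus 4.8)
The plan is to prove the statement by obstruction theory, isolating the asserted bijection into three formal ingredients --- the standard properties of the relative Stiefel--Whitney obstruction, the fact that $X_L$ has no $1$-handles (hence is simply connected), and Lefschetz duality --- and then matching the outcome with the linking-number formula in the statement via the usual dictionary between relative $2$-cohomology classes and sublinks. Throughout write $X=X_L$.

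First I would recall the obstruction-theoretic setup. Since every oriented $3$-manifold is spin, a spin structure $\mathfrak{t}$ on $Y=\partial X$ exists, and obstruction theory for lifting the classifying map of $TX$ from $BSO$ to $BSpin$ relative to the given lift over $Y$ produces a relative class $w_2(X,\mathfrak{t})\in H^2(X,Y;\mathbb{Z}_2)$ with three properties: (a) $w_2(X,\mathfrak{t})=0$ if and only if $\mathfrak{t}$ extends to a spin structure on $X$; (b) $j\bigl(w_2(X,\mathfrak{t})\bigr)=w_2(X)$ independently of $\mathfrak{t}$, where $j\colon H^2(X,Y;\mathbb{Z}_2)\to H^2(X;\mathbb{Z}_2)$; (c) $w_2(X,\mathfrak{t}+\delta)=w_2(X,\mathfrak{t})+\partial^{*}\delta$ for $\delta\in H^1(Y;\mathbb{Z}_2)$, with $\partial^{*}$ the connecting homomorphism of the pair.

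Next I would use that $L$ contains no $1$-handles, so $X$ is a $0$-handle with $2$-handles attached: it is simply connected and homotopy equivalent to a wedge of $2$-spheres, so $H^1(X;\mathbb{Z}_2)=0$, $H_2(X;\mathbb{Z}_2)$ is free on the capped cores $[H_i]$, and the cohomology exact sequence of $(X,Y)$ makes $\partial^{*}$ injective with image $\ker j$. By (c) the assignment $\mathfrak{t}\mapsto w_2(X,\mathfrak{t})$ is then injective, by (b) it lands in the single coset $j^{-1}(w_2(X))$ of $\ker j$, and since the number of spin structures on $Y$ equals $|H^1(Y;\mathbb{Z}_2)|=|\ker j|$ (and the map exists, so the coset is nonempty), it is a bijection onto $j^{-1}(w_2(X))$. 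It remains to translate this into sublinks: under Lefschetz duality $D\colon H^2(X,Y;\mathbb{Z}_2)\xrightarrow{\;\sim\;}H_2(X;\mathbb{Z}_2)$, a class $c$ corresponds to the sublink $L'=\bigcup\{L_i: [H_i]\text{ occurs in }D(c)\}$, for which $\langle c,[H_j]\rangle=D(c)\cdot[H_j]=\lk(L',L_j)\bmod 2$ with the convention $[H_i]^2=\lk(L_i,L_i)$ the framing --- this is exactly the $H^2$-class attached to $L'$ in the statement. Since $D$ carries $j$ to the natural map $H_2(X;\mathbb{Z}_2)\to H_2(X,Y;\mathbb{Z}_2)$ through which the mod $2$ intersection form factors, and $\langle w_2(X),\alpha\rangle=\alpha\cdot\alpha$ by the Wu formula, one gets $j(c)=w_2(X)$ if and only if $D(c)\cdot\alpha=\alpha\cdot\alpha$ for all $\alpha$, i.e.\ if and only if $L'$ is a characteristic sublink, $\lk(L_i,L')\equiv\lk(L_i,L_i)\pmod 2$ for every component $L_i$. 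Composing the two bijections gives the theorem.

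The argument above is almost entirely soft, its only concrete inputs being the Wu formula and the Lefschetz-duality dictionary, both standard. The step I expect to be the genuine obstacle --- and the one carried out with explicit handle pictures in Gompf--Stipsicz --- is the cochain-level identification of the obstruction cocycle of $w_2(X,\mathfrak{t})$ on each $2$-handle with the difference between the framing of $L_i$ and the framing induced on the meridian $\mu_i$ by $\mathfrak{t}$. This is what actually pins down the torsor --- so that, for instance, the empty sublink corresponds to the spin structure that extends over $X$ --- and it is what makes the correspondence algorithmically usable in Kaplan's procedure, even though it can be bypassed for the purely set-theoretic statement; unwinding how a spin structure on a surgered $3$-manifold encodes framings is the subtle point, everything else being formal.
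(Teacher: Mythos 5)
Your argument is correct: the torsor/counting argument via the exact sequence of the pair $(X,Y)$, Lefschetz duality, and the Wu formula is exactly the standard proof, and your closing remark rightly identifies the cochain-level framing computation as the only genuinely geometric input (needed to pin down which sublink goes with which spin structure, though not for the bare bijection). The paper offers no proof of its own here --- the statement is quoted from Gompf--Stipsicz (Proposition 5.7.11 and its surrounding discussion), and your proof is essentially the one given there.
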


\begin{prop}[\cite{GompfStip}]\label{Kaplan}
Assume that we have a Kirby diagram $L$ of a 3-manifold $Y$ which doesn't contain any 1-handles. Fix any spin structure $\mathfrak{t}$ on $Y$ and assume that $L'$ is the corresponding characteristic sublink. The following steps will lead to a Kirby diagram of a spin filling of $(Y,\mathfrak{t})$:\\

1. Slide one component $K'$ of $L'$ over the others. The characteristic sublink corresponding to $\mathfrak{t}$ in the new Kirby diagram  
will be the sublink consisting of one component $K'$.\\

2. Unknot $K'$ using blow-ups. The blow-up circles can be imagined as connected sum of two small meridian circles $m_{i1},m_{i2}$ along a band $b_i$ forming $D_i$ as in Figure \ref{Kaplan2}.
\begin{figure}[H]
\centering
\includegraphics[scale=0.4]{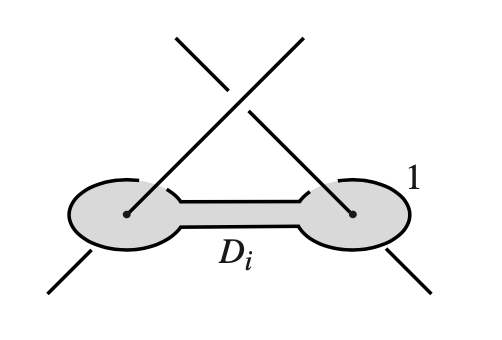}
\caption{Step 2 in Kaplan's algorithm\ \cite{GompfStip}}\label{Kaplan2}
\end{figure}

The characteristic sublink will be the union of $K'$ and all the blown-up circles. \\

3. One can again use blow-ups to change the crossings between the bands and components of $L$, without changing the characteristic sublink, See \mbox{Figure \ref{Kaplan3}}.
\begin{figure}[H]
\centering
\includegraphics[scale=0.3]{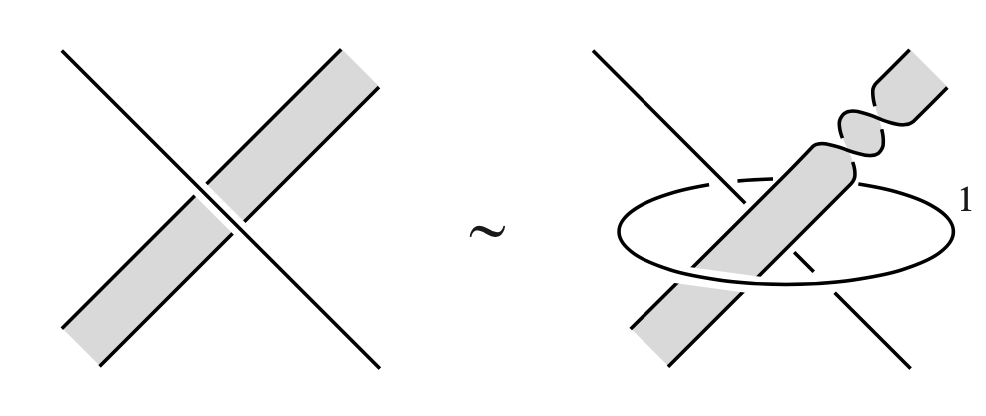}
\caption{Step 3 in Kaplan's algorithm\ \cite{GompfStip}}\label{Kaplan3}
\end{figure}
Use an isotopy to turn $K'$ into a circle in the plane and then use the operation of Figure \ref{Kaplan3} to turn the characteristic sublink into Figure \ref{Kaplan3'}.\\ 

\begin{figure}[H]
\centering
\includegraphics[scale=0.3]{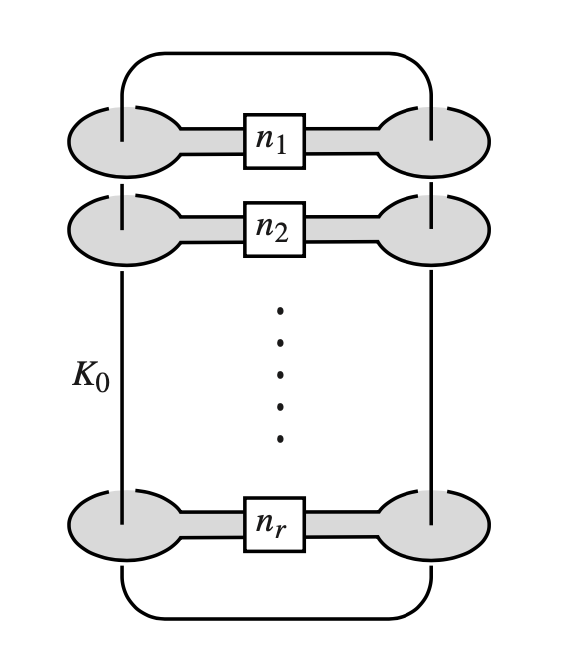}
\caption{Step 3 in Kaplan's algorithm\ \cite{GompfStip}}\label{Kaplan3'}
\end{figure}
4. The operation shown in Figure \ref{Kaplan4} can be done using a blow-up.
\begin{figure}[H]
\centering
\begin{center}
\includegraphics[scale=0.3]{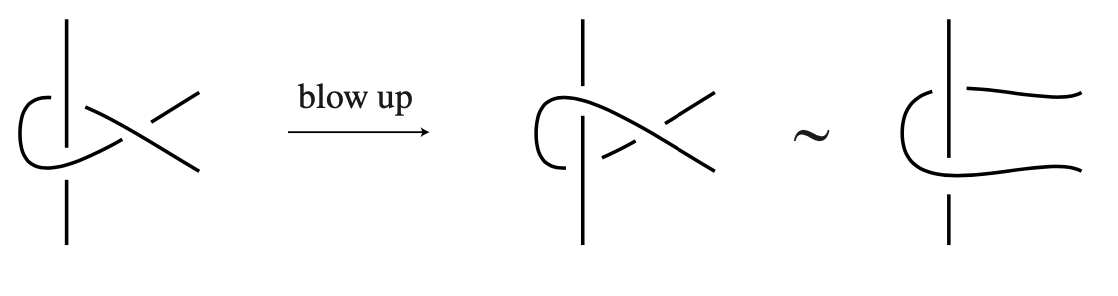}
\end{center}
\caption{Step 4 in Kaplan's algorithm\ \cite{GompfStip}}\label{Kaplan4}
\end{figure}
Use this operation to turn the characteristic sublink into an unlink.\\

5. Consider each component of the characteristic sublink one by one. Blowing up its meridians turns the framing to $\pm 1$. Then, by blowing down the characteristic sublink, one can turn it into the empty link. \\

In the end, you'll have a Kirby diagram with even farmings. This \mbox{4-manifold} with its unique spin structure is a spin filling of $(Y,\mathfrak{t})$.
\end{prop}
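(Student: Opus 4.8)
The plan is to run the five steps of the algorithm as an explicit sequence of Kirby moves — handle slides, blow-ups, and blow-downs — each of which fixes the boundary $3$-manifold $Y$, while keeping track at every stage of the sublink of the current diagram that is characteristic for the given spin structure $\mathfrak{t}$. The identification of spin structures with characteristic sublinks in Theorem \ref{charspin} is natural under Kirby moves, because it records the relative Stiefel--Whitney obstruction $w_2(X_L,\mathfrak{t}) \in H^2(X_L,Y;\mathbb{Z}_2)$, a class that depends only on $(Y,\mathfrak{t})$; so it suffices to verify, one move at a time, how the distinguished sublink is rewritten in the new diagram. Once the procedure reaches a diagram all of whose framings are even, the linking matrix — which is the intersection form of the handlebody $X$, since there are no $1$-handles — is even and $X$ is simply connected, so $w_2(X)=0$ and $X$ carries a unique spin structure $\mathfrak{s}$; the distinguished sublink is then empty, so $w_2(X,\mathfrak{t})=0$ and $\mathfrak{s}|_Y=\mathfrak{t}$. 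This reduces the statement to checking that each step does what is claimed and that Step 5 terminates with only even framings.

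Step 1 is linear algebra over $\mathbb{Z}_2$: dually to Theorem \ref{charspin}, $\mathfrak{t}$ corresponds to a vector $\xi \in H_2(X_L;\mathbb{Z}_2) \cong \mathbb{Z}_2^{|L|}$, and $L'$ consists of the components whose classes appear in $\xi$. If $\xi=0$ we are already in the even-framing case; otherwise handle slides of a chosen component $K'$ of $L'$ over the others effect elementary basis changes that turn $\xi$ into a single generator, so the characteristic sublink becomes $\{K'\}$ — under a slide of $K_a$ over $K_b$ the framings and pairwise linking numbers transform by $\lk(K_a',K_c) = \lk(K_a,K_c) \pm \lk(K_b,K_c)$, which is exactly the corresponding row operation on $\xi$. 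In Steps 2--4 one introduces $\pm 1$-framed unknots (blow-ups, which do not change $Y$) to unknot $K'$, to change crossings between the auxiliary bands $b_i$ and the rest of $L$, and to remove clasps, so that after Step 4 the characteristic sublink is an unlink lying in a plane disjoint from a projection of the remaining components. For each local picture in Figures \ref{Kaplan2}--\ref{Kaplan4} one checks that, when the newly created blow-up circle is adjoined to the distinguished sublink as prescribed, the characteristic condition $\lk(L_i,L') \equiv (\text{framing of } L_i) \pmod 2$ still holds for every component $L_i$ — including the self-linking contribution of a component that itself lies in $L'$, which is the subtle term (it is precisely the analogue of the $\deg_W(v)$ correction in \eqref{eq:1}). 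Finally, in Step 5 one blows up meridians to normalise each component of the now-unlinked characteristic sublink to framing $\pm 1$ and then blows the sublink down; since $\mathfrak{t}$ has been tracked throughout, the characteristic sublink of the resulting diagram is empty, i.e.\ all of its framings are even, and we are in the situation of the first paragraph.

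The difficulty is entirely in the bookkeeping of Steps 2--5 rather than in any single idea: one must verify, for every local modification, that the prescribed update of the distinguished sublink preserves the characteristic condition (this is where the self-linking term matters), that the blow-ups of Steps 2--4 can be arranged to realise the depicted crossing- and clasp-changes without creating linking that breaks the inductive hypothesis, and that the blow-downs of Step 5 leave the residual framings even — equivalently, that no component outside the characteristic sublink links it oddly, which holds precisely because that sublink is characteristic. Granting these routine (if lengthy) checks, the final diagram has no $1$-handles and even intersection form, hence determines a simply connected spin $4$-manifold whose unique spin structure restricts to $\mathfrak{t}$ on $Y=\partial X$, which is what we wanted; in particular this also furnishes a constructive proof that every spin $3$-manifold bounds a spin $4$-manifold.
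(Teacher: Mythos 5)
The paper gives no proof of this proposition---it is quoted from Gompf--Stipsicz---and your outline reproduces the standard argument from that source: classify spin structures by characteristic sublinks, track the distinguished sublink through handle slides and blow-ups (which fix $\partial X = Y$ and the spin structure it carries), and observe that an empty characteristic sublink is equivalent to all framings being even, hence to the handlebody being spin. One small imprecision in your Step 5: the point is not that components outside $L'$ link it evenly, but that every component links $L'$ with the same parity as its own framing (the characteristic condition), so blowing down the $\pm 1$-framed characteristic unlink changes each framing by $\mp \lk(L_i,K_j)^2$, i.e.\ by its own parity, which is exactly what makes all residual framings even.
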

We are going to show that, in the setting of Theorem \ref{capbound}, Kaplan's algorithm simplifies and, as a result, one can compute the change in the signature and second betti number and prove the obstruction of Theorem \ref{capbound}. We will use Lemma \ref{Kaplansimp} in the proof.\\

Before we state the lemma we need to introduce some notations. We call a framed link a \emph{chainmail link} if it's constructed in the following way. Let $D$ be a weighted and signed plane graph. Graph $D$ can have loops and multiple edges. Assigned to each $v_i \in V_D$, there is an integer weight $w_i \in \mathbb{Z}$ and assigned to each $e_k \in E_D$, there is a sign $\mu_k \in \{+ , -\}$. The framed link $L_D$ is constructed by considering an unknot component $L_i$ oriented counter-clockwise and with framing $w_i$ centred around each $v_i \in V_D$ and then add a left-handed (resp.~right-handed) clasp between the unknot components corresponding to $v_i$ and $v_j$ for each edge $e_k \in E_D$ between $v_i$ and $v_j$ with $\mu_k = +$ (resp.~$-$). This definition generalizes the construction of Tait surgery diagram explained in Section \ref{Sec2}. For more on these see \cite{Polyak}.\\

Now we are going to explain a modification of the first step of Kaplan's algorithm. This procedure is called $\emph{MK1}$ and is defined in Definition \ref{MK1}. 

\begin{defi}\label{MK1}
Let $L_D$ be the chain mail link based on the connected plane graph $D$ and $T$ be a spanning tree of $D$. Fix an arbitrary orientation (on each edge) and a total order $<$ on $E_T$. Consider the following procedure: \\
1. Take the maximal edge $e_m$ of $T$ in the ordering and assume it is directed from $v_p$ to $v_t$. Slide $L_p$ over $L_t$ with an orientation-preserving band.\\
2. Contract $e$ in $T$. The two vertices at two ends of $e$ will form a new vertex which we denote by $v_p$ (this labeling will be important in repeating step 1).\\
3. Repeat the process until $|V_T|=1$.\\
The knot corresponding to the remaining vertex is denoted by $\MK(L_D,T)$.
\end{defi}
\begin{lemm}\label{Kaplansimp}
Let $L_D$ be the chain mail link based on the connected plane graph $D$.  There exist a spanning rooted tree $T$ with a total ordering and direction on edges, such that $\MK(L_D,T)$ is an unknot
\end{lemm}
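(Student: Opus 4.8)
The plan is to build the spanning tree $T$ together with its ordering inductively, peeling off one \qq{leaf} of a carefully chosen tree at a time, and to check that each slide in $\MK(L_D,T)$ unknots the component we are about to absorb. The key observation is that for a chainmail link $L_D$, the component $L_i$ centred at $v_i$ is an unknot linking only the components $L_j$ with $\{v_i,v_j\}\in E_D$, and two such components are clasped (once, with sign $\mu_k$). So if $v_i$ is a leaf of a spanning tree $T$ and we process the edge $e=\{v_p,v_i\}$ at $v_i$ \emph{last} among all edges incident to $v_i$ in $D$ — equivalently, if we first slide $L_i$ over every other neighbour to cancel those clasps — then after those slides $L_i$ will be an unknot clasped only with $L_p$, and sliding $L_p$ over $L_i$ (or $L_i$ over $L_p$, matching the direction convention of Definition \ref{MK1}) turns $L_i$ into a standard unknot split from the rest of the diagram.

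The cleanest way to organise this is the following. First I would pick a spanning tree $T$ of $D$ and root it at a vertex $r$; order $E_T$ so that edges farther from the root are larger (a reverse-BFS or reverse-DFS order). For step 1 of $\MK$ we always take the maximal remaining edge $e_m=\{v_p,v_t\}$, which by the choice of order joins a current leaf $v_t$ to its parent $v_p$. Before executing that slide, I claim we may assume $L_t$ has already been made an unknot clasped only with $L_p$: the slides coming from the edges of $D$ (not $T$) incident to $v_t$ can be used to cancel all clasps between $L_t$ and its non-tree neighbours, and since $v_t$ is a leaf of the \emph{current} contracted tree, all its tree-neighbours except $v_p$ have already been contracted into other vertices and their clasps cancelled in earlier rounds. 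Here one uses the standard handle-slide fact (as in \cite{GompfStip}) that sliding $L_t$ over $L_j$ with an appropriately oriented band removes a $\pm$-clasp between them at the cost of modifying $L_t$'s framing and its other clasps in a controlled way — crucially without creating new self-crossings of $L_t$. Then the slide along $e_m$ absorbs $L_t$: $L_t$ becomes a small unknot bounding an obvious disc disjoint from everything, and contracting $e_m$ records that its former clasps (now all gone) have been transferred to $v_p$. Iterating down to $|V_T|=1$ leaves $\MK(L_D,T)=L_r$, which at that stage has no clasps left and hence is an unknot.

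The main obstacle is bookkeeping the handle slides so that \qq{$L_t$ stays unknotted and all its clasps get cancelled before it is absorbed.} Concretely one must verify: (a) that a clasp between $L_t$ and $L_j$ can always be removed by a single oriented slide of $L_t$ over $L_j$ — this is where the orientation-preserving band in Definition \ref{MK1} and the sign of the clasp $\mu_k$ interact, and one has to make the right choice so the two clasps \emph{cancel} rather than double up; and (b) that the new clasps created between $L_t$ and $L_p$ (resp.~between $L_j$ and $L_p$) by such a slide do not obstruct later rounds — they don't, because when we later process the edge above $v_p$ we will again cancel all of $v_p$'s clasps first. I would phrase (a)(b) as a single invariant maintained along the algorithm: \emph{after round $k$, the contracted diagram is again a chainmail link $L_{D_k}$ on the contracted graph $D_k$}, with updated weights and signs. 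Under this invariant Lemma \ref{Kaplansimp} is immediate, since a chainmail link on a single vertex is an unknot. The only genuinely delicate point is checking that a handle slide of one chainmail component over an adjacent one preserves the chainmail form — i.e.\ produces no knotting and only modifies clasps and framings — and I expect that to follow from a direct local picture of the clasp, essentially the move already illustrated in Section \ref{Sec2} read in reverse.
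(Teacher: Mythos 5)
There is a genuine gap, on two counts. First, your plan inserts extra slides that are not part of the procedure being analysed: $\MK(L_D,T)$ performs exactly one handle slide per edge of $T$ (slide $L_p$ over $L_t$, then contract), and never touches the non-tree edges of $D$. Those clasps do not need to be, and in fact cannot be, \qq{cancelled} the way you describe: sliding $L_t$ over a neighbour $L_j$ changes $\lk(L_t,L_j)$ by the framing of $L_j$ (and changes $\lk(L_t,L_k)$ by $\lk(L_j,L_k)$ for the other components), so a single slide removes a clasp only when that framing is $\pm 1$ — which for chainmail links arising from Tait graphs (framings $-\deg_W(v_i)\leq -2$) essentially never happens. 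The correct invariant is not that clasps disappear but that after sliding $L_p$ over $L_t$ the merged component inherits, as honest clasps, all edges of $D$ from $v_p$ or $v_t$ to the remaining vertices, i.e.\ the diagram is again a chainmail link on the contracted plane graph $D/\{v_p,v_t\}$; the leftover parallel clasps between $L_p$ and $L_t$ become kinks removable by R1 moves.

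Second, the point you defer as \qq{a direct local picture of the clasp} is precisely where the argument lives, and it is not local. If $v_p$ and $v_t$ are joined by several parallel edges and some other vertex of $D$ sits inside a region bounded by those edges, then after the band sum the other component threads through the resulting curve, the R1 simplification is unavailable, and the chainmail form is not preserved. This is why the paper does not take an arbitrary rooted spanning tree with a reverse-BFS order: it builds $T$ and its ordering inductively by always contracting a pair $(v_r,v_s)$ whose edge-region $ER(v_r,v_s)$ is \emph{minimal} with respect to inclusion, which forces all other vertices to lie outside that region and yields a standard planar model in which the slide visibly produces an unknot encircling $v_r$ and $v_s$. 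Your induction step, as stated, can fail for a tree edge whose endpoints bound a bigon containing another vertex, so the planarity-driven choice of which edge to contract (and hence of $T$ itself) is the missing idea.
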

\begin{proof}[Proof of Lemma \ref{Kaplansimp}]
We construct $T$ by induction on the number of vertices in $D$. For any two adjacent vertices $v_i,v_j$ in $D$, Let $ER(v_i,v_j)$ be the maximal bounded region in the plane bounded by the edges between $v_i$ and $v_j$ with respect to inclusion. Pick $r,s$ such the $ER(v_r,v_s)$ is minimal; i.e., it doesn't contain $ER(v_i,v_j)$ for any $i,j$. Note that a clear example of such a minimal pair is a pair of adjacent vertices which only have one connecting edge.\\
\begin{figure}[h]
\centering
\begin{center}
\includegraphics[scale=0.3]{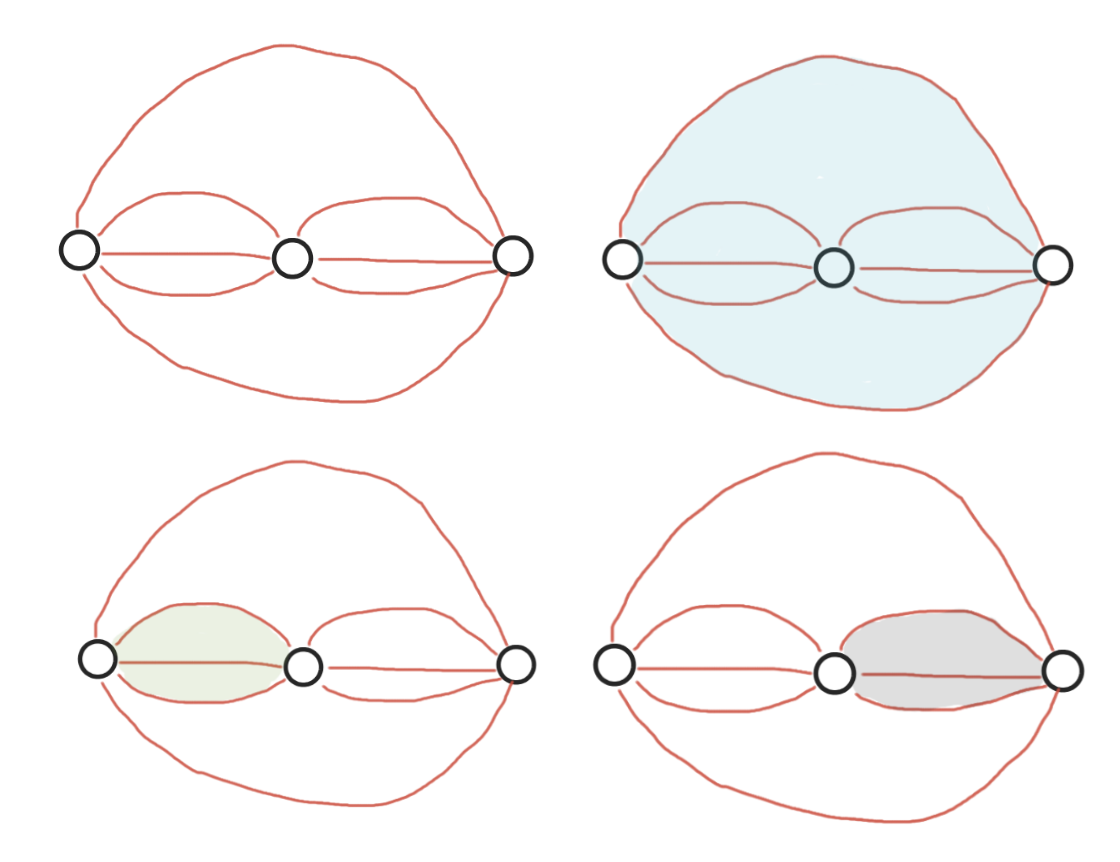}
\end{center}
\caption{Examples of $ER(v_i,v_j)$, the green and black regions are minimal}\label{MK1pic}
\end{figure}
\begin{figure}[h]
\centering
\begin{center}
\includegraphics[scale=0.3]{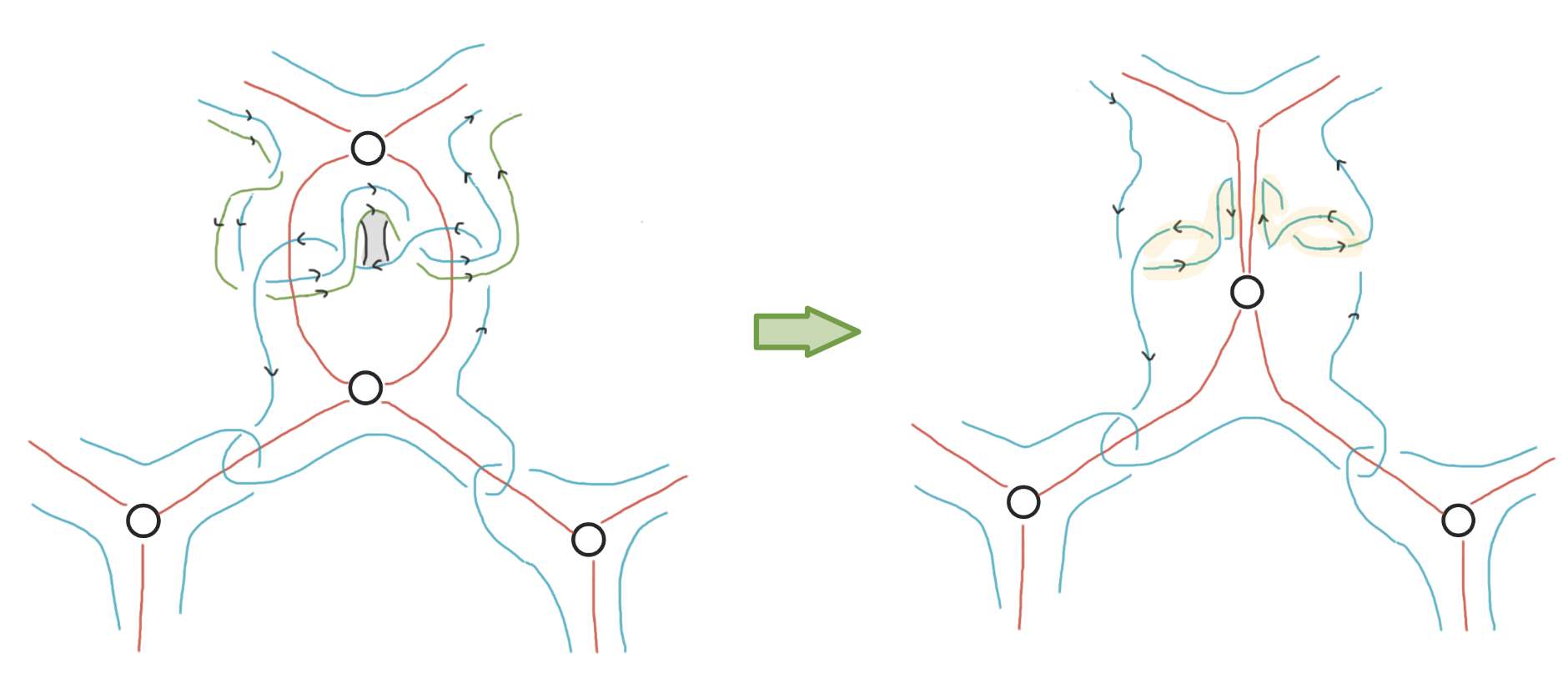}
\end{center}
\caption{Handle slides in \MK}\label{MK1pic}
\end{figure}\\

Picking this minimal pair guarantees that all the other vertices lie in $\mathbb{R}^2 \setminus ER(v_r,v_s)$ and this gives an standard model (See Figure \ref{MK1pic}) for the configuration of the clasps between $L_r$ and $L_s$ with respect to the other clasps involving $L_r$ or $L_s$. We use this to control the result of sliding $L_r$~over~$L_s$.\\

Without loss of generality assume that $r=1$ and $s=2$. Figure \ref{MK1pic} shows this sliding operation. It is clear that after a number of $R1$ moves, $L_1$ will be an unknot around $v_1$ and $v_2$ and there will be clasps associated with each edge between $v_1$ or $v_2$, and any $v_j$ for $j \neq 1,2$. This means that, after sliding $L_1$ over $L_2$ and deleting $L_2$, we will have a chainmail link on the plane graph $D/\{v_1,v_2\}$, which is the plane graph coming from contracting $v_1$ and $v_2$ to one vertex which we will denote by $v_{1,2}$. This decreases the size of the vertex set by one. Using induction, we know that there exists a spanning tree $T'$ such that $\MK(L_{D/\{v_1,v_2\}},T')$ is an unknot. Spanning tree $T$ can be constructed by replacing $v_{1,2}$ with $v_1$ and $v_2$ and the edge between them, directing the edge from $v_1$ to $v_2$ and putting it as the new maximal edge in the total ordering. 
\end{proof}
\begin{rema}\label{MK1}
    Sliding a component $K$ of the characteristic sublink $C$ over another component $K'$ can be seen as a change of basis of $H_2$. As explained in \cite{GompfStip}, the characteristic sublink which represents the same $Spin$ structure in this new basis is just $C-\{K'\}$. Based on this, one can turn the characteristic sublink into a knot with handle slides in the first step of Kaplan's algorithm. The same reasoning shows that \MK can replace the first step of Kaplan's algorithm when $C$ is connected. When $C$ isn't connected, one can apply this to all connected components of $C$ and end up with an unlink. Then one can slide these unknots over each other and delete one component at each step. This procedure can be done on an arbitrary planar tree with the unknots as its vertices similarly to \MK. 
\end{rema}
\begin{rema}\label{linkingchange}
Note that the linking between the components of the link changes under the handle slides. We can update the linking matrix at each step with the following rule: 
$$\lk(L_{1,2} \ , \ L_i) = \lk(L_1,L_i)+\lk(L_2,L_i).$$
\end{rema}
\begin{proof}[Proof of Theorem \ref{capbound}]
   Assume that $(X,\mathfrak{s})$ is a simply connected negative definite spin filling of $Y=(\Sigma(S^3,K),\mathfrak{t})$.  Let $L$ be the Kirby diagram of $\Sigma(D^4,F_W)$ described in Section 2. Let $L'$ be the characteristic sublink of $L$ associated with $t$. We are going to apply the modified version of Kaplan's algorithm on $L$ using $L'$ which will result in a simply connected spin filling $(X',\mathfrak{s}')$ of $(\Sigma(S^3,K),\mathfrak{t})$. We can take $-X'$ and build the closed 4-manifold $W=X \cup_{Y} (-X')$ which is also spin since $\mathfrak{s}|_{Y}=\mathfrak{s}'|_{Y}=\mathfrak{t}$. Using Furuta's inequality (Theorem \ref{Furuta}) on $W$ gives
   \begin{equation}\label{Furutaused}
      b_2(X)+b_2(-X')\geq \frac{10}{8}|\sigma(X)+\sigma(-X')|+2.
   \end{equation}
   The right-hand side comes from Novikov additivity. Now we need to compute $b_2(-X')$ and $\sigma(-X')$. \\

Based on Lemma \ref{Kaplansimp} and Remark \ref{MK1}, we know that Steps $2,3,4$ of Kaplan's algorithm won't be needed in our setting, and we can easily compute the change of $b_2$ and $\sigma$. First note that in each of the described handle slides, the framings change in the following way. If we assume that framing of $L_p , L_s$ are $r_p, r_s$, respectively, then the framing of the $L_p$ after the sliding will be $r_p + r_s + 2\lk(L_p,L_r)$. Using induction and Remark \ref{linkingchange}, we can see that framing on the final component of the characteristic sublink (after finishing Step $1$) is equal to 
$$\Sigma_{v_i \in C} \ g_{ii} + 2 \Sigma_{\underset{i \neq j}{v_i , v_j \in C}} \ \lk(L_i,L_j) = \Sigma_{v_i \in C} \ g_{ii} + 2 \Sigma_{\underset{i \neq j}{v_i , v_j \in C}} \ g_{ij}.$$ 

Since $g_{ii}=-deg_{W}(v_i)$ and $g_{ij}=|E(v_i,v_j)|$, we will have that $$\Sigma_{v_i \in C} \ g_{ii} + 2 \Sigma_{\underset{i \neq j}{v_i , v_j \in C}} \ g_{ij} = -\Sigma_{v_i \in C} \ deg_{W}(v_i) + 2 \Sigma_{\underset{i \neq j}{v_i , v_j \in C}} \ |E(v_i,v_j)| $$
$$= -|E_{W}(V_C, V_W \setminus V_C)| - 2|E_C| + 2|E_C| = -|E_{W}(V_C, V_W \setminus V_C)|.$$
 
This is the right-hand side of the inequality stated in Theorem \ref{capbound}. Let's denote this value by $-f$. Note that in Step 1, we only use handle slides and isotopies which means that the filling won't change. To calculate the change in $b_2$ and signature, we only need to look at Step $5$. In this step, we blow up $f-1$ meridians in order to turn the characteristic sublink into an unknot with framing $-1$ and then blow down this unknot. These increase $b_2$ and $\sigma$ by $f-2$ and $f$ respectively. Now we only need to use this information in Furuta's inequality. Note that since $X$ is negative definite $\sigma(X)=-b_2(X)$. Assuming that $f \geq 9m$, which is the assumption of Theorem \ref{capbound} (where $m=|V_W|-1$), we can rewrite Equation \ref{Furutaused}, 
$$b_2(X)+m+f-2 \geq \frac{10}{8}|-b_2(X)+m-f|+2$$
$$\Leftrightarrow b_2(X)+m+f-2 \geq \frac{10}{8}(b_2(X)+f-m)+2$$
$$\Leftrightarrow \frac{18}{8}m \geq \frac{2}{8}f+\frac{2}{8}b_2(X)+4.$$
The final inequality is a clear contradiction to $f \geq 9m$. 
\end{proof}
\begin{rema}\label{Ue}
    This procedure gives a generalization of a construction given in \cite{Ue} through Neumann--Seibenmann invariant. Recall that, for a plumbed 3-manifold $Y$, the Neumann--Seibenmann invariant $\bar{\mu}$ is defined as follows. Assume that $\Gamma$ is the plumbing tree and $P(\Gamma)$ is the 4-manifold constructed from plumbing sphere bundles based on $\Gamma$. We know that $Y = \partial P(\Gamma)$. Let $w_s$ be the indicator vector of the characteristic sublink associated with a spin structure $\mathfrak{s}$ on $Y$. Then
    $$\bar{\mu}(Y,\mathfrak{s}) : = \frac{1}{8}(\sigma (P(\Gamma))-\langle w_s, w_s \rangle),$$
     where $\langle  \cdot , \cdot \rangle$ represents the intersection pairing. Ue proves that a Seifert homology sphere $Y$ with Spin structure $\mathfrak{s}$ bounding a negative-definite 4-manifold $X$ with Spin structure $\mathfrak{s}_X$ must satisfy 
    $$\frac{-8\bar{\mu}(Y,\mathfrak{s})}{9} \leq b_2(X) \leq -8 \bar{\mu}(Y,\mathfrak{s}).$$
    Now an obstruction to the existence of simply connected negative-definite spin fillings is $\bar{\mu}(Y,\mathfrak{s}) \geq 0$. In cases when $\widetilde{W}$ is a star-shaped tree (which is the plumbing tree of a Seifert homology $S^3$) one can apply this obstruction to our problem. Whenever $\widetilde{W}$ is a tree, any characteristic subgraph $C$ will be a disjoint union of isolated vertices. As a result, if $w_C \in \mathbb{Z}^m$ is the indicator vector of $V_C$, then 
    \begin{equation}\label{Uecompute}
           \langle w_C , w_C \rangle = \sum_{v_i \in C} - g_{ii} = \sum_{v_i \in C} - deg_W(v_i) = -|E_W(V_C,V_W \setminus V_C)|.
    \end{equation}
    This comes from the fact that $E_C = \emptyset$. Combining Equation \ref{Uecompute} and definition of $\bar{\mu} (Y,\mathfrak{s})$ gives us 
    $$ 8\bar{\mu} (Y,\mathfrak{s}) = -m + |E_W(V_C,V_W \setminus V_C)|.$$
    This means that the obstruction of \cite{Ue} is equivalent to Theorem \ref{capbound}.\\
\end{rema}
This simplification of Kaplan's algorithm and the fact that one can build a characteristic unknot without blowing up or down is of independent interest. The following corollary easily follows from this observation. \\
\begin{coro}\label{characteristicS2}
If a closed 4-manifold $X$ has a chainmail Kirby diagram, then it has a characteristic sphere. 
\end{coro}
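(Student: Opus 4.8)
The plan is to read the characteristic sphere of Corollary~\ref{characteristicS2} directly off the normal form for the first step of Kaplan's algorithm furnished by Lemma~\ref{Kaplansimp} and Remark~\ref{MK1}. Write the given handle decomposition as $X = X_L \cup (\text{$3$- and $4$-handles})$, where $X_L$ consists of one $0$-handle together with the $2$-handles attached along the chainmail link $L = L_D$; since $X$ is closed and has no $1$-handles, the $3$- and $4$-handles fill $\partial X_L$, so $\partial X_L \cong \#^k(S^1 \times S^2)$. If $X$ is spin then $w_2(X) = 0$ and a small unknotted $2$-sphere inside a coordinate ball is nullhomologous, hence trivially characteristic, and we are done; so assume $X$ is not spin. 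Then $w_2(X_L) = w_2(X)|_{X_L} \neq 0$, and for every spin structure on $\partial X_L$ the characteristic sublink $L'$ of $L$ given by Theorem~\ref{charspin} is nonempty.

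First I would apply the move $\MK$ to $L'$. By Lemma~\ref{Kaplansimp} and Remark~\ref{MK1}, a finite sequence of handle slides among the components of $L'$ --- each a self-diffeomorphism of $X$ that changes a basis of $H_2$ --- yields a new Kirby diagram of $X$ in which the characteristic sublink has become a single component $K'$, and $K'$ is an \emph{unknot} lying in the plane of the diagram. The slides leave $X$ unchanged and only update the framing/linking matrix (Remark~\ref{linkingchange}), with $\{K'\}$ still characteristic for the updated data. Since $K'$ is unknotted in $S^3 = \partial(\text{$0$-handle})$ it bounds an embedded disk $\Delta$ pushed into the interior of the $0$-handle, and $\Delta$ is disjoint from everything else in the picture, as all remaining attaching circles and all $2$-handle cores lie outside $\mathrm{int}\, D^4$. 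Then $S := \Delta \cup_{K'} D_{K'}$, with $D_{K'}$ the core of the $2$-handle attached along $K'$, is an embedded $2$-sphere in $X_L \subset X$, and $[S]$ is precisely the standard generator $[H'] \in H_2(X)$ carried by that $2$-handle.

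It remains to see that $[S]$ is characteristic. Writing $[H_i]$ for the generators carried by the other $2$-handles, the Kirby diagram gives $[S] \cdot [H_i] = \lk(K', L_i)$ and $[S] \cdot [S] = $ (framing of $K'$); since $\{K'\}$ is a characteristic sublink of the updated link, $\lk(K', L_i) \equiv \lk(L_i, L_i) = [H_i] \cdot [H_i] \pmod 2$ for every $i$. Because $X$ has no $1$-handles the classes $[H_i]$ and $[H']$ generate $H_2(X)$ --- it is a quotient of the free abelian group on the $2$-handles --- and the intersection form of the closed manifold $X$ restricted to these classes is the linking/framing matrix of $L$, which the $3$- and $4$-handles do not affect. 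Expanding an arbitrary $\beta = \sum_i a_i [H_i] + a' [H']$ and using that each off-diagonal entry of that matrix occurs twice then gives $[S] \cdot \beta \equiv \beta \cdot \beta \pmod 2$ for all $\beta \in H_2(X)$, so $[S]$ is Poincar\'e dual to $w_2(X)$ and $S$ is a characteristic sphere.

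The hard part is the second paragraph: the whole argument rests on $\MK$ producing a component $K'$ that is genuinely an \emph{unknot} in $S^3$ and not merely some knot, because it is unknottedness that lets the spanning surface of $K'$ be taken to be an embedded disk pushed into the $0$-handle, so that $S$ is a \emph{sphere} rather than a higher-genus surface. This is exactly the content of Lemma~\ref{Kaplansimp} and the accompanying discussion in Remark~\ref{MK1} (for a disconnected characteristic sublink one runs $\MK$ on each connected component and then slides the resulting unknots together). The remaining points --- tracking the linking data through the slides, and verifying that the $3$- and $4$-handles cannot break the characteristic condition since they only pass to a quotient of $H_2$ and leave the intersection form unchanged --- are routine bookkeeping.
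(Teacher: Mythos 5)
Your argument is correct and is exactly the proof the paper has in mind: the paper leaves Corollary~\ref{characteristicS2} as an immediate consequence of the observation that Lemma~\ref{Kaplansimp} and Remark~\ref{MK1} slide the characteristic sublink into a single planar unknot without blow-ups, which you then cap off with a pushed-in disk and the $2$-handle core and verify is characteristic by the standard linking-matrix computation. Your extra bookkeeping (the trivially spin case, simple connectivity from the absence of $1$-handles, and the fact that the $3$- and $4$-handles only pass to a quotient of $H_2$) fills in precisely the details the paper omits.
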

This can act as an obstruction for a manifold to have a chainmail diagram. The following is an explicit example of this. This example was pointed out to us by Marco Golla.\\
\begin{exem}\label{charsphere}
    The 4-manifold $X=$ exotic $\mathbb{C}P^2 \# 2 \overline{\mathbb{C}P^2}$ \cite{APexotic} which is symplectic and minimal doesn't have any characteristic spheres. Any embedded sphere $S \subset X$ satisfies $S \cdot S \leq -2$. Adjunction inequality gives us that $S \cdot S \leq 0$, and due to the minimality of $X$ there is no embedded sphere with self-intersection $-1$. Let $S \cdot S = -m$. We use $P_m$ to denote the 4-manifold constructed by a negative linear plumbing of $m-1$ disk bundles with Euler number $2$ over $S^2$. There is a natural diffeomorphism between $\partial P_m$ and $\partial N(S)$. As a result, one can form the closed 4-manifold $M = X \setminus N(S) \bigcup P_m$. 4-manifold $M$ is spin since $S$ is characteristic and $P_m$ is spin. Using Novikov additivity we can deduce that $\sigma(M)=m-1$. Furthermore, $b_2^{-}(M)=1$ since the negative definite part of $H_2(M)$ lies inside $X \setminus N(S)$. This is in contradiction with the classification of intersection forms of indefinite closed spin 4-manifolds. 
\end{exem}
\section{Spin negative definite plumbed fillings}\label{Sec5}
The previous results might lead one to ask if there are any non-special alternating knots $K$ such that $\Sigma(S^3,K)$ has a simply-connected, spin and negative definite filling. A result such as the following theorem might further support this. 
\begin{theo}
    A non-special alternating link $K$ doesn't have a spanning filling which is spin and negative definite. 
\end{theo}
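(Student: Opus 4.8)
The plan is to show that the spin hypothesis forces the spanning surface to be orientable, and then that, for an alternating link, an orientable spanning surface whose double branched cover is negative definite can only occur in the special alternating case.

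Let $F$ be a connected spanning surface of $K$ and set $X=\Sigma(D^4,F^{+})$. By the theorem of Gordon--Litherland \cite{Gordon1978} the intersection form of $X$ is the Goeritz form $\mathcal{G}_F$ on $H_1(F;\mathbb{Z})$, and $\mathcal{G}_F$ presents $H_1(\Sigma(S^3,K);\mathbb{Z})$, so $|\det\mathcal{G}_F|=\det(K)\neq 0$ since $K$ is a non-split alternating link; hence $X$ is negative definite precisely when $\mathcal{G}_F$ is. If $X$ is spin then $w_2(X)=0$, so by Wu's formula $\mathcal{G}_F$ is an even form. The first key step is that an even Goeritz form forces $F$ to be orientable. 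Indeed, if $F$ is non-orientable it contains a one-sided simple closed curve $\gamma$, which is nonzero in $H_1(F;\mathbb{Z})$ since it pairs nontrivially with $w_1(F)$; a regular neighbourhood $N(\gamma)\subset F$ is a Möbius band whose boundary is a knot $J$, and since the Goeritz pairing of two curves depends only on a neighbourhood of the spanning surface near them, $\mathcal{G}_F(\gamma,\gamma)$ equals the $1\times 1$ Goeritz form of this Möbius band viewed as a spanning surface of $J$, so $|\mathcal{G}_F(\gamma,\gamma)|=\det(J)$, which is odd. Thus $\mathcal{G}_F$ has an odd diagonal entry, contradicting evenness. Hence a spin spanning filling arises from a Seifert surface of $K$.

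Now assume $F$ is orientable, so $\mathcal{G}_F=V+V^{T}$ for a Seifert matrix $V$, a form of rank $b_1(F)$. If $X$ is negative definite then $V+V^{T}$ is negative definite and $\sigma(K)=\operatorname{sig}(V+V^{T})=-b_1(F)$. Since $\sigma(K)=\operatorname{sig}(V'+(V')^{T})$ for every connected Seifert surface $F'$ of $K$ with Seifert matrix $V'$, we have $|\sigma(K)|\le b_1(F')$ for all such $F'$; combined with $|\sigma(K)|=b_1(F)$ this shows $F$ realizes the minimal first Betti number among connected Seifert surfaces of $K$ and that $K$ attains the extremal value $|\sigma(K)|=b_1(F)$.

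Finally I would invoke the classical description of the signature of alternating links (Murasugi \cite{murasugi}): an alternating link attains this extremal signature value if and only if it is special alternating; equivalently, a minimal genus Seifert surface of an alternating link has definite symmetrised Seifert form exactly when a checkerboard surface of a reduced alternating diagram is orientable. (This can also be deduced from the Gordon--Litherland signature formula applied to a reduced alternating diagram, using that Seifert's algorithm realizes the genus of an alternating link and that the Gordon--Litherland correction term vanishes exactly on orientable spanning surfaces.) Putting the three steps together, a spin negative definite spanning filling of $\Sigma(S^3,K)$ can exist only when $K$ is special alternating, which is the contrapositive of the statement. I expect the last step to be the main obstacle: extracting from the literature, or reproving via Gordon--Litherland, the exact statement that extremality of the signature characterises the special alternating links among alternating links. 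The first two steps are essentially Wu's formula, the determinant bound for Goeritz forms, and the definition of the Gordon--Litherland form.
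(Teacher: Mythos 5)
Your route is genuinely different from the paper's, which is a three-line argument: by Gordon--Litherland the intersection form of $\Sigma(D^4,F^{+})$ is the Goeritz form of $F$; by Greene's theorem in \cite{AltGreene} a spanning surface of an alternating link with \emph{definite} Goeritz form is isotopic to a Tait (checkerboard) surface of an alternating diagram; and the Goeritz form of a Tait surface is even only when every vertex of the white graph has even degree, i.e.\ only when $K$ is special. Your first two steps are essentially correct and are a nice alternative opening: spin forces the Gordon--Litherland form to be even, an even form rules out one-sided curves because $\mathcal{G}_F(\gamma,\gamma)=\lk(\gamma^{\sharp},\gamma)$ is odd when the double push-off $\gamma^{\sharp}=\partial N(\gamma)$ is connected (equivalently, because the boundary of a M\"obius band is a knot and knots have odd determinant), so $F$ must be orientable; and definiteness of $V+V^{T}$ then forces $|\sigma(K)|=b_1(F)$ with $F$ of minimal first Betti number.

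The gap is exactly where you flag it, and it is the crux of the theorem rather than a loose end. The statement ``an alternating link attains $|\sigma|=b_1$ of a minimal-genus Seifert surface if and only if it is special alternating'' is not a quotable classical theorem in this form: Murasugi gives the forward direction (special $\Rightarrow$ extremal signature), but the converse is the direction you need, and the only proof I know of it passes through precisely the definite-surface theorem of \cite{AltGreene} that the paper invokes. Your parenthetical sketch of an independent proof also rests on a false assertion: the Gordon--Litherland correction term $e(F)/2$ vanishes on orientable spanning surfaces but not \emph{only} on orientable ones (a non-orientable spanning surface can have normal Euler number zero), so it cannot detect orientability of a checkerboard surface the way you suggest. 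The efficient repair is to cite Greene's theorem directly after your evenness observation: a definite spanning surface of an alternating link is a Tait surface, and an even Tait Goeritz form means $K$ is special --- at which point your orientability and signature steps become unnecessary and the argument collapses to the paper's proof.
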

\begin{proof}
    Assume $\Sigma(D^4,F^{+})$ is a negative-definite spin filling. We know that the intersection form of $\Sigma(D^4,F^{+})$ is the Goeritz form of the surface, which means that $G_F$ is a negative definite spanning surface. Using the main theorem of \cite{AltGreene}, we can conclude that $F$ must be the white Tait surface in a diagram of $K$. Then we know, that for $G_F$ to be even, the knot needs to be special, which contradicts the assumption.
\end{proof}
For general fillings, this is far from the truth. We present an example of a non-special alternating knot $K$ with a spin negative definite filling of $\Sigma(S^3,K)$. This example comes from the lens space realization problem and one can generate a family of examples in the same way. We must mention that part of the inspiration for the example comes from \cite{Aceto} which addresses negative definite fillings of lens spaces with minimal $b_2$, but we need to use the lens space fillings that emerge as trace of a surgery on a knot instead of rational homotopy ball fillings. 
\begin{exem}\label{counterexam}
The knot $K$ will be the alternating knot in Figure \ref{counterexampic}.
\begin{figure}[h]
\centering
\begin{center}
\includegraphics[scale=0.3]{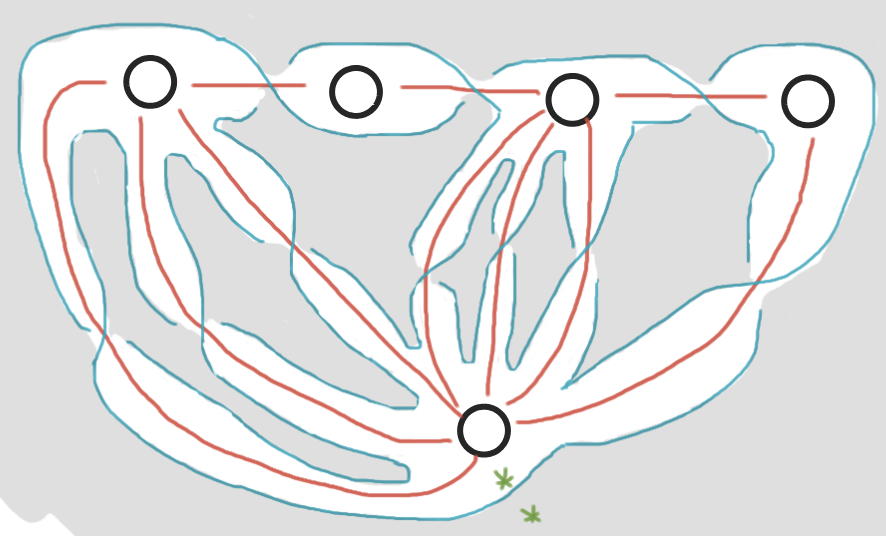}
\end{center}
\caption{Alternating knot K of Example \ref{counterexam}}\label{counterexampic}
\end{figure}\\
The white Tait graph is also drawn in the figure. The reduced white Tait graph will be a single path with framings $(-4,-2,-5,-2)$. Plumbing along this path with these framings will give us the standard negative definite filling of $\Sigma(S^3,K)$. We use the notation $P(a_1, \dots ,a_n)$ to denote the linear plumbing with framings $a_1, \dots , a_n$. For example,
$$\partial P(-4,-2,-5,-2) = \Sigma(S^3,K).$$
We have a standard embedding of $P(-2,-5,-2)$ in $P(-4,-2,-5,-2)$ which comes from the embedding of the plumbing graph of the first 4-manifold in the second. In other words, $P(-4,-2,-5,-2)$ can be constructed from $P(-2,-5,-2)$ by attaching a 2-handle with framing $-4$ to its boundary. The integers $(-2,-5,-2)$ also arises in the following fractional expansion:
$$\frac{16}{9}=2-\frac{1}{5-\frac{1}{2}}.$$
This means that $P(-2,-5,-2)$ bounds $L(16,9)$ (see \cite{Aceto}). We claim that $L(16,9)$ also has a filling in the form of the trace of a knot surgery i.e. 
$$L(16,9) = \partial Tr(K^{-16}).$$
This comes from the description of the Berge knots of type $I_{pm}$. Based on \cite{Lensspace}, picking $i,k \in \mathbb{Z}$ such that $gcd(i,k)=1$ and setting $p=ik\pm1$ and $q \equiv -k^2 \ (mod \ p)$ leads to the lens space $L(p,q)$ which can be realized by a positive surgery on a knot. Setting $i=3$ and $k=5$, gives us $p=16$ and $q \equiv -25 \ (mod \ 16)$, which means that there exists a knot $K$ such that $L(16,7) = K^{+16}$, which in turn means that $L(16,7) = \partial Tr(K^{+16})$. By reversing the orientation, we have $L(16,9)=-L(16,7)= \partial Tr(K^{-16})$.\\

Let $i: P(-2,-5,-2) \hookrightarrow P(-4,-2,-5,-2)=X$ be the aforementioned embedding. Now we construct a 4-manifold $X'$ by deleting the interior of $Im(i)$ from $X$ and gluing $Tr(K^{-16})$ in its place. Then $X'$ will be the result of attaching a $-4$-framed 2-handle to $Tr(K^{-16})$, which means it is simply connected and has $b_2=2$, as it has a handle decomposition with two 2-handles. The intersection form is even as the framing of both 2-handles is even, and hence, $X'$ is spin. Using Novikov additivity we can also prove that $X'$ is negative definite. This is again due to the fact that, while constructing $X'$, we delete a submanifold of $X$ with signature $-3$ and replace it with one with signature $-1$. This finally gives us the example we need. 
\end{exem}
Although we don't have a general result about non-existence of a simply connected negative definite spin filling, but by adding some combinatorial conditions to the Kirby diagram of the filling we can prove such results. The first result of this type is Theorem \ref{plumbed}. This result is directly rooted in Neumann's plumbing calculus. In the following theorem we recall the facts we need from \cite{Neumann}. Note that the branched double covers of links with non-zero determinant are rational homology spheres. As a result, they can be realized as plumbings of disk bundles over surfaces when the base surfaces are all spheres and the plumbing graph is a tree. We can describe these plumbings with a tree with integer weights on vertices and $\pm$ signs on edges. This means that we don't need Neumann's plumbing calculus in its full generality. In the following theorem, we only recall the facts we need.\\

In the rest of the paper, we use the term chain to refer to subgraphs which look like one of the shapes in Figure \ref{chain}.
\begin{figure}[h]
\centering
\begin{center}
\includegraphics[scale=0.4]{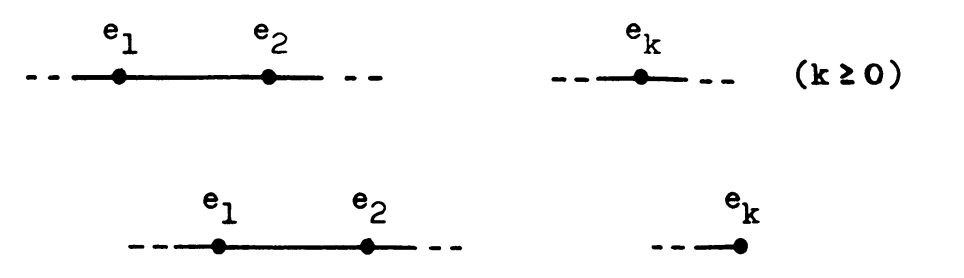}
\end{center}
\caption{A chain in a plumbing graph \cite{Neumann}}\label{chain}
\end{figure}

\begin{theo}[\cite{Neumann}]\label{Neumannclac}
    Any plumbing tree $T$ can be reduced to a normal form using the following moves while keeping the boundary unchanged. \\
    R0. Reverse the sign of all the edges adjacent to a vertex $v$.\\
    R1a. Delete a component consisting of an isolated vertex with weight $\pm 1$.\\ 
    R1b-R3. The moves which are described in Figure \ref{Neumannmove}.\\
    
The normal form is defined by the following properties:\\
N1. None of the operations can be applied, except that $T$ might contain a component like the Figure \ref{N1} with $k \geq 1$ and $e_i \leq -2$.\\
N2. The weights $e_i$ on all chains of $T$ satisfy $e_i \leq -2$.\\
N3. No portion of $T$ has the form shown in Figure \ref{N3}, unless it's in a component of the form shown in Figure \ref{N3'} with $k \geq 1$ and $e_i \leq -2$
\begin{figure}[H]
\centering
\begin{center}
\includegraphics[scale=0.4]{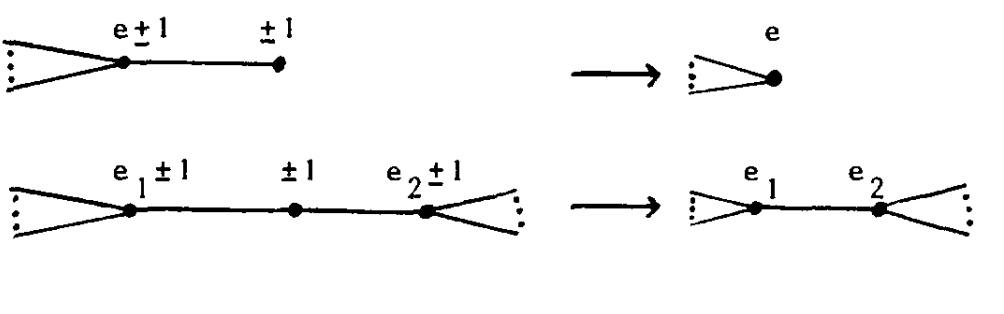}
\end{center}
\begin{center}
\includegraphics[scale=0.4]{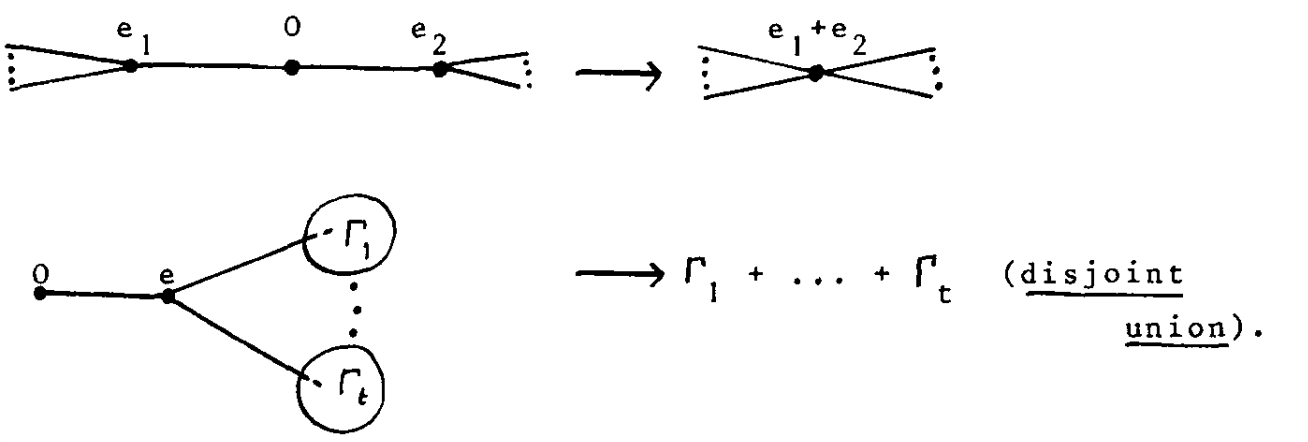}
\end{center}
\caption{Neumann moves \cite{Neumann}}\label{Neumannmove}
\end{figure}
\begin{figure}[H]
\centering
\begin{center}
\includegraphics[scale=0.3]{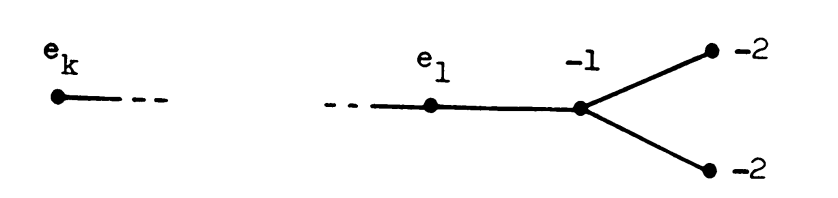}
\end{center}
\caption{Properties of Neumann normal form \cite{Neumann}}\label{N1}
\end{figure}
\begin{figure}[H]
\centering
\begin{center}
\includegraphics[scale=0.3]{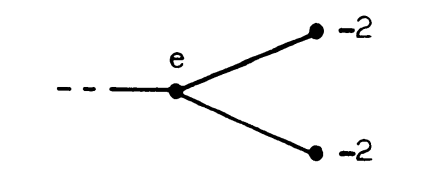}
\end{center}
\caption{Properties of Neumann normal form \cite{Neumann}}\label{N3}
\end{figure}
\begin{figure}[H]
\centering
\begin{center}
\includegraphics[scale=0.3]{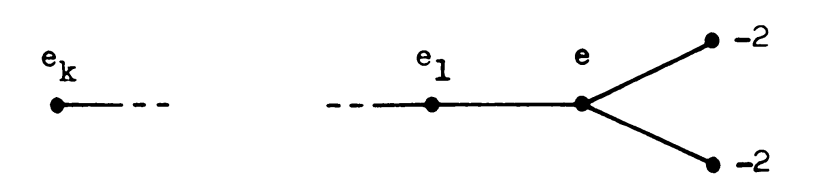}
\end{center}
\caption{Properties of Neumann normal form \cite{Neumann}}\label{N3'}
\end{figure}
\end{theo}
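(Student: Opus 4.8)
The plan is to split the assertion into two independent parts and prove each: first, that every one of the listed moves R0, R1a, R1b--R3 preserves the oriented homeomorphism type of the plumbed boundary $\partial P(T)$; and second, that repeated application of these moves, organized as a terminating algorithm with a suitable complexity measure, always reaches a tree satisfying the three normal-form conditions N1, N2, N3. The common tool for both parts is to pass from the weighted, signed plumbing tree $T$ to its associated framed-link surgery diagram for $\partial P(T)$, in the spirit of the surgery-diagram construction of Section \ref{Sec2}: a vertex of weight $e_i$ becomes an unknot with framing $e_i$, and a signed edge records a clasp of the corresponding sign between the two unknots. In this picture each move becomes a manipulation of a framed link, and boundary-invariance reduces to showing that the move is realized by a finite sequence of blow-ups and blow-downs together with isotopy, all of which leave the surgered $3$-manifold unchanged.

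For the first part I would check the moves one at a time. Move R0 is purely a convention: reversing the signs of all edges incident to a vertex $v$ amounts to changing the trivialization chosen along the circle direction of the sphere bundle over $v$ and does not alter the gluing, hence fixes $\partial P(T)$; importantly, R0 is used only to normalize the edge signs (for instance along a fixed spanning tree) and is never iterated freely, so it cannot threaten termination later. Moves R1a and R1b are blow-downs: an isolated $\pm 1$-vertex is a $\pm 1$-framed unknot contributing only an $S^3$ connected summand that may be deleted, while a $\pm 1$-vertex of degree one or two is blown down, changing the framings of its neighbours by $\mp 1$ and flipping the incident edge signs in exactly the pattern of Figure \ref{Neumannmove}. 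The remaining moves R1b--R3 of Figure \ref{Neumannmove} are the substantive local modifications; each is realized by first introducing auxiliary $\pm 1$-framed meridians (blow-ups) so that the local picture becomes a standard chain, reconfiguring, and then blowing those circles back down, with the bookkeeping governed by continued-fraction identities for the linking matrix. I would verify each local identity by computing the linking matrix before and after and exhibiting the explicit blow-up/blow-down sequence; the move that genuinely changes the combinatorial shape of the tree rather than merely its weights requires the most careful diagram chase.

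For the second part I would run a reduction algorithm measured by a lexicographic complexity, for instance the pair $(\,|V_T|,\, N(T)\,)$, where $N(T)$ counts the vertices whose weight or sign configuration violates one of N1--N3. The key claim is that whenever $T$ is not yet in normal form, at least one reducing move applies and strictly lowers this complexity, unless $T$ already lies in one of the excepted families of Figures \ref{N1} and \ref{N3'} with $k \ge 1$ and $e_i \le -2$, which are declared terminal. Concretely, a violation of N2 (a chain weight $e_i \ge -1$) is cleared by R1a, R1b, or R2, each of which lowers $|V_T|$; a local configuration violating N3 as in Figure \ref{N3}, and not sitting inside an excepted component, is simplified by R3; and when neither N2 nor N3 can be improved, N1 is exactly the assertion that no move applies and the process halts. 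Since $|V_T|$ is a non-negative integer that strictly drops under the moves changing it, while $N(T)$ strictly drops under the remaining ones, the complexity cannot descend forever and the algorithm terminates in a normal form.

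The hard part will be the case analysis of the third paragraph feeding into termination: one must enumerate every way N1, N2, N3 can fail, match each failure to an applicable move, and confirm both that the move is boundary-preserving (from part one) and that it strictly lowers the chosen complexity without reintroducing a previously removed defect, the real danger being a move that clears one violation while creating another and thereby cycles. Handling the excepted components cleanly, so that the algorithm recognizes them as genuinely terminal rather than looping on them, is the most delicate point; I would treat those families as base cases of the induction and check directly that no non-trivial reducing move is available there. I would deliberately \emph{not} reprove Neumann's uniqueness and completeness statement, that two normal forms with orientation-preservingly homeomorphic boundaries must coincide, since the theorem as stated asserts only the existence of a reduction to normal form, and uniqueness, though the deeper content of the calculus, is not required for the applications here.
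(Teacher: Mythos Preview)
The paper does not prove this theorem at all: Theorem~\ref{Neumannclac} is stated with attribution to \cite{Neumann} and is quoted purely as background, with no argument supplied. There is therefore no ``paper's own proof'' against which to compare your proposal; the author simply imports Neumann's result and immediately uses its uniqueness statement in the proof of Theorem~\ref{plumbed}.

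Your outline is a reasonable sketch of how Neumann's argument goes (Kirby-calculus verification that each move preserves the boundary, followed by a reduction procedure), and it is honest about where the work lies. Two remarks are worth making. First, your closing sentence says you would deliberately \emph{not} reprove uniqueness of the normal form; but uniqueness is precisely the part of Neumann's calculus that the present paper actually uses (in the proof of Theorem~\ref{plumbed}, the author shows two plumbing trees are both in normal form and concludes they coincide). So while your reading of the theorem statement is literally correct, the application downstream needs the stronger fact. Second, your termination argument is a bit optimistic: not every Neumann move strictly decreases $|V_T|$ (R3, for instance, does not), so the complexity measure has to be chosen with more care than the pair you propose, and the ``no cycling'' claim is exactly where Neumann's original paper does real work.
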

You might notice that the moves described in Theorem \ref{Neumannclac} don't describe the change in the edge signs. When we are dealing with trees, $R0$ gives us that the edge signs don't matter.\\

Before we proceed with proving Theorem \ref{plumbed}, we need to define the excessive property. This part is based on \cite{murasugi}. Recall that a link is called algebraic (in the sense of Conway) if it can be constructed as the boundary of a plumbing of twisted bands. For a weighted tree $T$, we denote the algebraic link constructed from a plumbing based on $T$ by $l(T)$. The tree $T$ is called \emph{negative excessive} if 
$$\forall a_i \in V_T: \ \ \ w(a_i) \leq min \{-2 , -deg_T(a_i)\},$$
The following lemma is proved by Murasugi. 
\begin{lemm}[\cite{murasugi} Proposition 3.3. and 4.1.]\label{excessive}
    For a negative excessive tree $T$, The link $l(T)$ is alternating. Furthermore there is an alternating diagram of $l(T)$ such that $T$ is isomorphic to the reduced white Tait graph $\widetilde{W}$. This isomorphism takes the weights of $T$ to $g_{ii}$ (diagonal of the Goeritz form).
\end{lemm}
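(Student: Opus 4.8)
The plan is to produce an explicit reduced alternating diagram $D_T$ of $l(T)$ directly from the plumbing and then read off its white Tait graph. Recall that $l(T)=\partial F_T$, where $F_T$ is the spanning surface obtained by taking, for each $a_i\in V_T$, an unknotted band carrying $|w(a_i)|$ coherently signed half-twists, and plumbing the band of $a_i$ to the band of $a_j$ along a small square whenever $\{a_i,a_j\}\in E_T$. Projecting $F_T$ in the obvious way gives a diagram in which each band becomes a twist region $R_i$ with $|w(a_i)|$ crossings, all of the same sign, and each edge of $T$ becomes a junction where $R_i$ and $R_j$ meet. The first observation is that the inequality $w(a_i)\le -\deg_T(a_i)$ is precisely what allows every plumbing to be performed \emph{at a crossing} of the relevant twist region: for each incident edge of $a_i$ one glues the neighbouring band onto the square containing one of the $|w(a_i)|$ crossings of $R_i$, so that $\deg_T(a_i)$ crossings of $R_i$ become junctions and $|w(a_i)|-\deg_T(a_i)\ge 0$ of them are left untouched. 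I would fix this standard model for $D_T$, together with the checkerboard colouring for which the Goeritz matrix is the $G_W$ of Section~\ref{Sec2} (equivalently, all crossings carry $\mu=-1$).

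The second and main step is to show that $D_T$ is a reduced alternating diagram. Alternation is the crux. Inside each $R_i$ the over/under pattern alternates because all its crossings have the same sign, so what must be checked is that this pattern is carried correctly across each junction; I would verify this by rooting $T$ and proceeding from the leaves inward, checking at each vertex that once its junctions are accounted for the local picture at every junction is forced into the alternating one. Both parts of the excessive inequality are used here: $w(a_i)\le -\deg_T(a_i)$ so that all the junctions fit inside the twist regions, and $w(a_i)\le -2$ so that no twist region degenerates and, in particular, each leaf $a_k$ keeps at least $|w(a_k)|-1\ge 1$ crossings free. Reducedness then follows from the count above: in the white Tait graph $W$ of $D_T$ every region other than the unbounded one has degree $|w(a_i)|\ge 2$, and since each component of $T$ minus an edge contains a leaf $a_k$ with $|w(a_k)|-1\ge 1$ crossings against the unbounded region, $W$ is $2$-edge-connected; by planar duality the black Tait graph has no loop and no bridge either, so $D_T$ has no nugatory crossing. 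This local-to-global check that $D_T$ is alternating is the step I expect to be most delicate, and it amounts to reorganising Murasugi's argument in \cite{murasugi} around the diagram $D_T$.

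For the last step I would identify $\widetilde{W}$ with $T$. Taking the unbounded region of $D_T$ as the marked region, the white regions are that region together with one region $v_i$ carried by each twist region $R_i$; the junction of $R_i$ and $R_j$ contributes exactly one crossing lying between $v_i$ and $v_j$, and each of the $|w(a_i)|-\deg_T(a_i)$ free crossings of $R_i$ lies between $v_i$ and the marked region. Hence deleting the marked vertex from $W$ leaves exactly the tree $T$, under $a_i\mapsto v_i$ and $\{a_i,a_j\}$ sent to the corresponding junction crossing, and
$$\deg_W(v_i)=\deg_T(a_i)+\bigl(|w(a_i)|-\deg_T(a_i)\bigr)=|w(a_i)|,$$
so $g_{ii}=-\deg_W(v_i)=w(a_i)$. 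This gives the required isomorphism $T\cong\widetilde{W}$ carrying the weights of $T$ to the diagonal of the Goeritz form.
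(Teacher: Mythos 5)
The paper does not actually prove this lemma; it is imported wholesale from Murasugi (Propositions 3.3 and 4.1 of \cite{murasugi}), so there is no in-paper argument to compare against. Your plan is nevertheless essentially the standard proof: produce an explicit diagram from the plumbing, check that it is reduced and alternating, and read off the Tait graph. All of your bookkeeping is correct: with one white region $v_i$ per twist region $R_i$, one junction crossing per edge of $T$, and $|w(a_i)|-\deg_T(a_i)$ crossings of $R_i$ facing the unbounded region, one gets $\deg_W(v_i)=|w(a_i)|$, hence $g_{ii}=w(a_i)$ and $\widetilde{W}\cong T$; and you correctly locate where each half of the excessive inequality is used ($w(a_i)\le-\deg_T(a_i)$ to absorb every plumbing into $R_i$, and $w(a_i)\le -2$ for reducedness). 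The condition is genuinely needed, not just convenient: the all-$(-2)$ $E_8$ tree violates it at its degree-three vertex, and its arborescent link is the non-alternating $(3,5)$ torus knot.

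The one real gap is the step you yourself flag: the verification that $D_T$ is alternating is only announced (``root $T$ and proceed from the leaves inward''), and that verification \emph{is} the content of Murasugi's propositions, so as written the proposal defers its entire mathematical substance. Two remarks toward closing it. First, your local model of ``plumbing at a crossing'' is not the literal plumbing picture --- the gluing square of a Murasugi sum contains no crossing --- so one must first isotope the plumbed surface so that one crossing of $R_i$ and one of $R_j$ merge into the single junction crossing; your crossing count $\sum_i|w(a_i)|-|E_T|$ is the right one for the final diagram, but the isotopy producing it is part of what has to be checked. Second, the alternation check becomes automatic if you run the construction backwards, exactly as the paper does in its example of accessible graphs in Section~\ref{Sec5}: embed $T$ in the plane, add a vertex $\hat v$ in the unbounded region joined to each $v_i$ by $|w(a_i)|-\deg_T(a_i)$ parallel edges (non-negative by excessiveness), and apply the median construction to $T\cup\{\hat v\}$. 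The resulting diagram is alternating by construction, with white Tait graph $T\cup\{\hat v\}$ and $g_{ii}=-\deg(v_i)=w(a_i)$; what then remains is a purely local check, at each vertex and edge of $T$, that the black checkerboard surface of this diagram is the plumbing of $|w(a_i)|$-twisted bands along $T$, so that its boundary is $l(T)$. This replaces your global leaves-inward induction with a local identification and is the cleaner way to complete the argument.
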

With this information, we can proceed with proving Theorem \ref{plumbed}.
\begin{proof}[Proof of Theorem \ref{plumbed}]
We start by proving that any simply connected negative-definite spin plumbed filling is automatically in normal form. Due to the spin condition, we won't have framing $\pm 1$ on any vertex as all framings are even. Due to the negative definite condition, we can't have any vertex with framing $0$ as all framings are negative. This means that conditions $N1$ and $N2$ are satisfied. To show that $N3$ is satisfied, we have to use the odd determinant condition. We know that the determinant of the knot is $|H_1(\Sigma(S^3,K),\mathbb{Z})|$. When the determinant is odd, the 2-torsion vanishes and, as a result, $H^1(\Sigma(S^3,K),\mathbb{Z}_2)=0$, which means that $\Sigma(S^3,K)$ has a unique spin structure. We now use Theorem \ref{Kaplan} to deduce that the number of characteristic sublinks of the Kirby diagram is equal to $1$. This in turn means that the plumbing tree $T$ has a unique characteristic subgraph.\\

We are going to show that in any tree containing the forbidden subgraph of condition $N3$, there exist an even number of characteristic sublinks. A characteristic subgraph $G$ can't contain the parent vertex of the $-2$-framed leaves since the number of edges between a $-2$-framed leaf and $G$ must be even (due to the definition of characteristic sublink). Let us use the names $L=\{l_1,l_2\}$ and $p$ to denote the $-2$ framed leaves and their parent vertex, and  $A:=G \cap \{l_1,l_2\}$. Let $G' =(G - A) \cup (L-A)$. This subgraph is also characteristic. The only change happens with taking the complement of $G \cap L$ on $L$, which means that $E(v,G)$ and $E(v,G')$ are only different for $v\in \{p,l_1,l_2\}$. In all three cases, the parity of $|E(v,G)|$ and $|E(v,G')|$ are the same as $|E(p,G')|=|E(p,G)|-|A|+(2 -|A|)$ and $|E(l_i,G)|=|E(l_i,G')| \pm 2$. This construction builds a bijection on the set of characteristic subgraphs, which means that the size of this set must be even. This gives us condition~$N3$.\\

Let $\widetilde{W}$ be the reduced white Tait graph of $K$. By Lemma \ref{excessive}, We know that $\widetilde{W}$ is isomorphic to the plumbing tree associated to $K$. Using the Tait surgery diagram, we can see that $\Sigma(D^4,F_W)$ is a plumbed filling. We are going to prove that this plumbed filling is also in normal form. The excessive condition forces all weights to be $\leq -2$ and as a result $N1$ and $N2$ are satisfied. Condition $N3$ is satisfied due to the same argument about the parity of the determinant. \\

Now using the uniqueness of Neumann normal form, one can deduce that if a simply connected negative definite spin plumbed filling exists, then its plumbing tree is exactly the reduced white Tait graph. This means that the framings in the white Tait graph; i.e., $g_{ii}$, must be all even, which is equivalent to the knot being special. 
\end{proof} 
The main idea behind Theorem \ref{plumbed} can be generalized to some other types of fillings. 
\begin{defi}
    We call a filling $X$ of a 3-manifold $Y$ a chainmail filling if and only if there exist a Kirby diagram of $X$ which is a chainmail link
\end{defi}

Based on the discussion of Section \ref{Sec2}, the 4-manifold $\Sigma (D^4,F_W)$ always gives a chainmail filling of the branched double cover. Unfortunately, there are no known normal forms for chainmail Kirby diagrams in the literature so the proof of Theorem \ref{plumbed} can't be replicated, but we can use the trick described here which is inspired by \cite{murasugi}.
\begin{defi}
    A weighted planar graph is called \emph{accessible} if it can be realized as the white Tait graph of an alternating link $K$ such that the weights are equal to diagonal entries of the Goeritz matrix of $K$. We call a chainmail filling accessible if it has a chainmail Kirby diagram which is based on an accessible planar graph. 
\end{defi}
The main examples of accessible planar graphs come from the following example: 
\begin{exem}
    Let $D$ be a 2-connected planar graph such that all vertices are adjacent to the unbounded region. Equivalently, any two different cycles of $D$ have at most one common vertex. Furthermore, assume that $D$ is negative excessive; i.e., weights satisfy the following inequality: 
    $$\forall v_i \in V_D: \ \ \ w(v_i) \leq min \{-2 , -deg_D(v_i)\}  .$$
    In this setting, one can add a vertex $\hat{v}$ in the unbounded region and connect it to all $v_i \in V_D$ such that 
    $$|E(\hat{v},v_i)| + deg_D(v_i) = |w(v_i)|.$$
    The median construction on $D \cup \{\hat{v}\}$ gives an alternating link such that the reduced white Tait graph is isomorphic to $D$ and the weights of $D$ will become the diagonal entries of Goeritz matrix. 
\end{exem}

\begin{theo}
    Let $K$ be an alternating link. Then $\Sigma(S^3,K)$ admits a simply connected negative definite spin accessible filling if and only if $K$ is special alternating
\end{theo}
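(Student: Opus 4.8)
The plan is to mimic the proof of Theorem~\ref{plumbed}, replacing Neumann's plumbing calculus with the ``accessibility'' trick inspired by \cite{murasugi}. The backward direction is immediate: if $K$ is special alternating, then $\Sigma(D^4,F_W^+)$ is a simply connected negative definite spin filling whose Tait surgery diagram is a chainmail link based on $\widetilde{W}$, and $\widetilde{W}$ is accessible by construction, so this is an accessible filling. The content is in the forward direction. Suppose $X$ is a simply connected negative definite spin filling of $\Sigma(S^3,K)$ presented by a chainmail link $L_D$ on an accessible weighted planar graph $D$. Accessibility means $D$ is itself the reduced white Tait graph $\widetilde{W'}$ of some alternating link $K'$, with the weights of $D$ equal to the diagonal of the Goeritz matrix $G_{W'}$; moreover $X \cong \Sigma(D^4,F_{W'}^+)$ since the intersection form of the chainmail diagram is $G_{W'}$ and a chainmail filling on $D$ is the double branched cover construction on the associated surface.

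First I would show that $X \cong \Sigma(D^4,F_{W'}^+)$ implies $\partial X = \Sigma(S^3,K') = \Sigma(S^3,K)$, so $K$ and $K'$ have branched double covers which are the same rational homology sphere; in particular $\det K = \det K'$ and the Goeritz forms $G_W$ and $G_{W'}$ present the same linking form up to the relevant equivalence. Next, since $X$ is negative definite and arises as $\Sigma(D^4, F_{W'}^+)$, its intersection form $G_{W'}$ is a negative definite Goeritz form, and applying Greene's characterization of alternating links \cite{AltGreene} (exactly as in the proof of the ``no non-special spanning filling'' theorem earlier in the paper) forces $F_{W'}$ to be a Tait surface of $K'$ with $G_{W'}$ the negative definite Goeritz matrix --- so far only recovering what accessibility already gave us. The new input is the \emph{spin} condition: $X$ spin forces $G_{W'}$ to be even, i.e. every diagonal entry $g_{ii} = -\deg_{W'}(v_i)$ is even, so $W'$ has no odd-degree vertex, hence $K'$ is special alternating.

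It then remains to bridge from ``$K'$ is special'' to ``$K$ is special.'' Here I would use the uniqueness statement underlying accessibility the same way normal-form uniqueness was used in Theorem~\ref{plumbed}: the accessible graph $D = \widetilde{W'}$ together with its weights is a genuine invariant pinned down by $\Sigma(S^3,K)$ (via the odd-determinant hypothesis one gets from $\det K$ odd, which by the earlier arguments guarantees a unique spin structure and hence a unique characteristic subgraph, ruling out the $N3$-type ambiguity), so $\widetilde{W'}$ must be isomorphic to $\widetilde{W}$ as weighted graphs, carrying $g_{ii}(W')$ to $g_{ii}(W)$. Since the former are all even, so are the latter, and $K$ is special. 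The main obstacle I anticipate is precisely this last rigidity step: without a Neumann-type normal form for chainmail diagrams one must argue directly that an accessible presentation of $\Sigma(S^3,K)$ determines $\widetilde W$ up to weighted isomorphism, and making that canonical --- presumably by invoking Greene's uniqueness of the negative definite Goeritz lattice for an alternating link (Theorem~\ref{Greene1} and \cite{AltGreene}) together with the odd-determinant hypothesis to kill torsion and hence any residual freedom --- is where the real work lies; the signature/parity bookkeeping around the chainmail-to-surface correspondence is routine by comparison.
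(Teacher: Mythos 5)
Your forward direction stalls exactly where you say it does, and the gap is genuine: the rigidity step "an accessible presentation of $\Sigma(S^3,K)$ determines $\widetilde{W}$ up to weighted isomorphism" is not available. The paper itself points out, just before this theorem, that there is no known normal form for chainmail Kirby diagrams, so the Neumann-calculus uniqueness argument from Theorem~\ref{plumbed} cannot be replicated here; your proposal to "presumably" extract such rigidity from \cite{AltGreene} plus an odd-determinant hypothesis does not close this, and in any case the theorem has no odd-determinant hypothesis to invoke (your appeal to uniqueness of the spin structure and of the characteristic subgraph therefore has nothing to stand on). Showing that the auxiliary link $K'$ is special (which your evenness argument does correctly) is not by itself enough, because you have no way to transport specialness from $K'$ back to $K$.

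The paper's actual route sidesteps the weighted-isomorphism problem entirely and needs only a count of vertices. Since the chainmail diagram on $D=\widetilde{W}_{K'}$ is simultaneously a Kirby diagram for $\Sigma(S^3,K')$, the two branched double covers are diffeomorphic, and Greene's theorem \cite{greenemutation} then says $K$ and $K'$ are \emph{mutants}. Mutation of alternating links preserves the number of white regions, so $b_2(X)=|V_D|=|V_{\widetilde{W}_{K'}}|=|V_{\widetilde{W}_K}|=m$. Now the equality case of Theorem~\ref{specialchar} --- $b_2(X)\le m$ with equality only for special alternating links --- forces $K$ to be special. This is the key input your proposal is missing: the mutation result converts the weak invariant $|V_D|$ into the sharp bound $b_2(X)=m$, and the heavy lifting is then done by Theorem~\ref{specialchar} rather than by any classification of accessible presentations.
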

\begin{proof}
    Assume such a filling $X$ exists and it has a chainmail diagram based on an accessible plane graph like $D$. Let $K'$ be an alternating link with $\widetilde{W}_{K'} = D$. This means that the chainmail Kirby diagram based on $D$ is also a Kirby diagram for $\Sigma(S^3,K')$, which means that branched double covers of $K$ and $K'$ are diffeomorphic. By a result of Greene \cite{greenemutation}, we can deduce that $K$ and $K'$ are mutants. Planar mutation of alternating knots preserves the number of white regions of the diagram and as a result $$b_2(X) = |V_D| = |V_{\widetilde{W}_{K'}}|=|V_{\widetilde{W}_K}|.$$
   Based on Theorem \ref{specialchar}, we can deduce that $K$ is special alternating.  
\end{proof}

\bibliography{bibtemplate}
\bibliographystyle{smfalpha} 

\end{document}